\newtheorem{theorem}{Theorem}[section]
\newtheorem{lemma}[theorem]{Lemma}
\newtheorem{proposition}[theorem]{Proposition}
\newtheorem{definition}[theorem]{Definition}
\newtheorem{remark}[theorem]{Remark}
\newproof{pf}{Proof}
\newcommand{\Int}{\displaystyle \int}
\newcommand{\Frac}{\displaystyle \frac}
\newcommand{\Lim}{\displaystyle \lim}
\newcommand{\R}{\mathbb R}
\numberwithin{equation}{section}
\journal{Applied Mathematics and Computation}
\def\ps@pprintTitle{%
  \let\@oddhead\@empty
  \let\@evenhead\@empty
  \def\@oddfoot{\reset@font\hfil\thepage\hfil}
  \let\@evenfoot\@oddfoot
}
\begin{document}

\begin{frontmatter}

\title{\textbf{Integrable solutions of a generalized mixed-type functional integral equation}}


\author[HW]{Haydar Abdel Hamid\corref{cor1}}
\cortext[cor1]{Corresponding author.}
\ead{habdelhamid@fbsu.edu.sa}
\author[HW]{Waad Al Sayed}
\ead{wsaid@fbsu.edu.sa}
\address[HW]{Faculty of Sciences and Humanities, Fahad Bin Sultan
University, P.O. Box 15700 Tabuk 71454, Saudi Arabia.}

\begin{abstract}
In this work, we prove the existence of  integrable solutions for the following  generalized mixed-type nonlinear functional integral equation
$$x(t)=g\left(t,(Tx)(t)\right)+f\left(t,\int_0^t k(t,s)u(t,s,(Qx)(s))\;ds\right),\;t\in[0,\infty).$$
Our result is established by means of a Krasnosel'skii type fixed point theorem proved in  
[M.A. Taoudi: \textit{Integrable solutions of a nonlinear functional integral equation 
on an unbounded interval}, Nonlinear Anal. 71 (2009) 4131-4136]. In the last
section we give an example to illustrate our result.
\end{abstract}

\begin{keyword}
Krasnosel'skii type fixed point theorem,
($ws$)-compact operator,
 measure of weak noncompactness,
separate contraction,
mixed-type nonlinear functional integral equation.
\MSC[2010] 47G10 \sep 47H10\sep  47H30  
\end{keyword}

\end{frontmatter}


\section{Introduction}
Consider  the following  mixed-type nonlinear functional integral equation

\begin{equation}
\label{e1}x(t)=g\left(t,(Tx)(t)\right)+f\left(t,\int_0^t k(t,s)u(t,s,(Qx)(s))\;ds\right),\;t\in[0,\infty).
\end{equation}
where $f$, $g:$ $\mathbb {R^+}\times \mathbb {R} \rightarrow \mathbb {R}$, 
$k :$ $\mathbb {R^+}\times \mathbb {R^+} \rightarrow \mathbb {R}$, 
$u:$ $\mathbb {R^+}\times \mathbb {R^+} \times \mathbb {R} \rightarrow \mathbb {R}$ 
and $(Tx)(t)$, $(Qx)(t)$ are given while $x(t)$ is an unknown function.

In \cite{BaCh}, the authors studied the existence of integrable solutions of the 
following  special case of the equation  (\ref{e1})
 
$$x(t)=f\left(t,\int_0^t k(t,s)u(s,x(s))\;ds\right),\;t\in[0,\infty).$$

The following generalization of this equation

$$x(t)=g(t,x(t))+f\left(t,\int_0^t k(t,s)u(s,x(s))\;ds\right),\;t\in[0,\infty),$$
has been studied by \cite{Ta09} with the presence of the perturbation term $g$.

In this paper, we are going  to study the existence of integrable solutions of 
the more general form (\ref{e1}). A classical point of view for solving Eq.  (\ref{e1}) is to write the equation in the form
 
\begin{equation}
\label{e2} Ax+Bx=x,
\end{equation}
where $A$ and $B$ are two nonlinear operators. 

Fixed point theory seems to be one of the most natural  and powerful  tools 
in studying the solvability of integral equations in the form (\ref{e2}). In \cite{Kr58},  Krasnosel'skii established 
 a fixed point theorem which was frequently used to solve some special integral equations in the form (\ref{e2}), see \cite{Kr64,KZPS}.  Krasnosel'skii combined the 
famous Banach contraction principle  of \cite{Ba22} and the classical Schauder 
fixed point theorem of \cite{Sch30} to prove that $A+B$ has a fixed point in a
nonempty closed convex subset $\mathcal{M}$ of a real Banach space $X$ if $A$ and $B$ satisfy
the following conditions (see \cite{Kr58,Sm80}):

\begin{itemize}[label=$\bullet$]
\setlength\itemsep{0em}
	\item $A$ is continuous and compact;
	\item $B$ is a strict contraction;
\item $A\mathcal{M}+B\mathcal{M}\subseteq \mathcal{M}$.
\end{itemize}

Generalizations and improvements of such a result have been made in several directions,
we refer for example to the papers \cite{ART10,Bar03,BarTe05,Ga10,Ga09,LaTa07,LaTaZe06,Re96,BuKi98,GaLaMoTa12,Ta10} and the references therein. 
A deep variant of Krasnosel'skii type fixed point theorems is established by Latrach and Taoudi 
in \cite{LaTa07}. In \cite{Ta09},  Taoudi established an improvement of this variant.
The most important advantage of \cite[Theorem 2.1]{LaTa07}  and \cite[Theorem 3.7]{Ta09} is  that the operator $A$ is not assumed to be compact.
Let us  recall the Krasnosel'skii type fixed point theorem
 \cite[Theorem 3.7]{Ta09}.

\begin{theorem}\label{theow2}
Let $\cal M$ be a nonempty bounded closed convex subset of a Banach space $X$. Suppose that $A:\mathcal{M}\rightarrow X$ and $B:\mathcal{M}\rightarrow X$ such that:\\
{\rm{(i)}} $A$ is (ws)-compact;\\
{\rm{(ii)}} There exists $\gamma \in \left[0,1 \right[ $ such that $\mu (AS+BS) \leq \gamma \mu(S)$ for all $S\subseteq \cal {M}$; here $\mu$ is an arbitrary measure of weak noncompactness on $X$;\\
{\rm{(iii)}} $B$ is a separate contraction;\\
{\rm{(iv)}} $A{\cal{M}} + B \cal{M}\subseteq \cal {M}$. \\
Then there is $x\in \cal {M}$ such that $Ax+Bx=x$ 
\end{theorem}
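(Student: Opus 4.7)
The plan is to reduce the two-operator equation $Ax+Bx=x$ to a fixed-point problem for a single auxiliary operator and then invoke a Darbo--Sadovskii-type theorem for (ws)-compact self-maps, in the spirit of Latrach and Taoudi \cite{LaTa07}. Since $B$ is a separate contraction, for each $y\in X$ the affine map $x\mapsto Ay+Bx$ is itself a separate contraction, hence admits a unique fixed point, and $I-B$ is a homeomorphism of $X$ onto itself with norm-continuous inverse. Set $\Psi:=(I-B)^{-1}\circ A$. By (iv) the Picard iterates $x_{n+1}=Ay+Bx_n$ starting in $\mathcal{M}$ remain in $\mathcal{M}$, and since $\mathcal{M}$ is closed their limit lies in $\mathcal{M}$ as well; hence $\Psi:\mathcal{M}\to\mathcal{M}$ is well defined, and every fixed point of $\Psi$ automatically satisfies $x=Ax+Bx$.

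Next I would verify that $\Psi$ inherits the two properties required by the single-operator theorem. For (ws)-compactness: if $(y_n)\subseteq\mathcal{M}$ converges weakly, (i) extracts a norm-convergent subsequence $Ay_{n_k}\to z$, and norm-continuity of $(I-B)^{-1}$ lifts this to $\Psi(y_{n_k})\to(I-B)^{-1}(z)$ in norm. For $\mu$-contractivity: every element of $\Psi S$ has the form $Ay+B\Psi(y)$ for some $y\in S$, so
$$\Psi S\;\subseteq\;AS+B(\Psi S)\;\subseteq\;A(S\cup\Psi S)+B(S\cup\Psi S).$$
Applying hypothesis (ii) to $S':=S\cup\Psi S$ and using the standard axiom $\mu(E\cup F)=\max\{\mu(E),\mu(F)\}$ of a measure of weak noncompactness yields
$$\mu(\Psi S)\;\le\;\gamma\,\max\{\mu(S),\mu(\Psi S)\},$$
which, since $\gamma<1$, forces $\mu(\Psi S)\le\gamma\,\mu(S)$.

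A single-operator fixed-point theorem for (ws)-compact, $\mu$-contractive self-maps of a nonempty bounded closed convex set then produces $x\in\mathcal{M}$ with $\Psi(x)=x$, i.e.\ $x=Ax+Bx$. The main difficulty I foresee is precisely the $\mu$-contractivity step: one has to pass from the inclusion $\Psi S\subseteq AS+B(\Psi S)$, in which the two summand sets differ, to the symmetric form $AS'+BS'$ demanded by (ii), and then absorb the extra $\mu(\Psi S)$ on the right-hand side. This genuinely uses monotonicity and the $\max$-behaviour of $\mu$ on finite unions; everything else---bijectivity of $I-B$, the invariance $\Psi(\mathcal{M})\subseteq\mathcal{M}$, and the (ws)-compactness of $\Psi$---is essentially routine once this algebraic trick is in place.
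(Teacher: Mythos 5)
The paper does not actually prove Theorem \ref{theow2} --- it imports it from \cite{Ta09} --- but your reduction to $\Psi=(I-B)^{-1}A$ followed by a Darbo--Sadovskii-type argument is indeed the standard route. The well-definedness of $\Psi$ and the invariance $\Psi(\mathcal{M})\subseteq\mathcal{M}$ (via (iv) and the Liu--Li fixed point theorem for separate contractions), the continuity of $(I-B)^{-1}$ (from $\psi(\|x-y\|)\le\|(I-B)x-(I-B)y\|$ and strict monotonicity of $\psi$), and the (ws)-compactness of $\Psi$ are all fine. The gap is exactly where you anticipated it. The chain
$$\mu(\Psi S)\le\mu\bigl(A(S\cup\Psi S)+B(S\cup\Psi S)\bigr)\le\gamma\,\mu(S\cup\Psi S)=\gamma\max\{\mu(S),\mu(\Psi S)\}$$
relies on the maximum property $\mu(E\cup F)=\max\{\mu(E),\mu(F)\}$, which is \emph{not} among the axioms of Definition \ref{d1}; axiom (2) only gives $\mu(E\cup F)\ge\max\{\mu(E),\mu(F)\}$. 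Worse, the concrete measure $\mu=c+d$ of (\ref{mfe}) that this paper actually feeds into the theorem fails the maximum property: with $E=\{n\chi_{[0,1/n]}:n\ge1\}$ and $F=\{\chi_{[n,n+1]}:n\ge1\}$ in $L^1(\mathbb{R}_+)$ one has $\mu(E)=\mu(F)=1$ but $\mu(E\cup F)=c(E\cup F)+d(E\cup F)=1+1=2$. So the inequality $\mu(\Psi S)\le\gamma\,\mu(S)$ is not established; replacing the max by subadditivity on unions would only yield the constant $\gamma/(1-\gamma)$, which is useless for $\gamma\ge 1/2$.

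The standard repair, and essentially the argument behind \cite[Theorem 3.7]{Ta09}, avoids asymmetric summands altogether by iterating closed convex hulls: set $K_0=\mathcal{M}$ and $K_{n+1}=\overline{\mathrm{conv}}\,\Psi(K_n)$. One checks inductively that $K_{n+1}\subseteq K_n$ and $\Psi(K_n)\subseteq K_n$, whence
$$\Psi(K_n)\subseteq AK_n+B\Psi(K_n)\subseteq AK_n+BK_n,$$
so hypothesis (ii) applies with the \emph{same} set in both summands and gives $\mu(K_{n+1})=\mu(\Psi(K_n))\le\gamma\,\mu(K_n)$. Hence $\mu(K_n)\to0$, axiom (5) produces a nonempty, weakly compact, convex, $\Psi$-invariant set $K_\infty=\bigcap_nK_n$, (ws)-compactness of $\Psi$ makes $\overline{\mathrm{conv}}\,\Psi(K_\infty)$ norm-compact, and the Schauder theorem furnishes a fixed point of $\Psi$, i.e.\ a solution of $x=Ax+Bx$. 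With this substitution for your union trick the proof is sound; as written, the $\mu$-contractivity step is unjustified for an arbitrary measure of weak noncompactness in the sense of Definition \ref{d1}.
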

Our aim is to prove the existence of solutions of Eq. (\ref{e1}) 
in the space $L^{1}(\mathbb{R}_{+})$ of  Lebesgue integrable functions on the 
the real half-axis $\mathbb{R}_{+}=[0,\infty)$.  Theorem \ref{theow2} plays a crucial role 
in establishing our result in Theorem \ref{mth1}. A technique of measures of
weak noncompactness used in \cite{BaCh} will be implemented.
The result obtained in this paper generalizes  the result of \cite{Ta09} to
a more general equation such as Eq. (\ref{e1})  and extends the technique used 
in \cite{BaCh} to our more general context under special assumptions.

The outline of this paper is as follows. In Section \ref{stwo}, we recall some notations, definitions and
basic tools which will be used in our investigations. Section \ref{sthr} is 
devoted to state our main result and to prove some preliminary results. In Section \ref{sfo},
we prove our main result. In the last section we construct a nontrivial example 
illustrating our result.
\section{Preliminaries}\label{stwo}

In this section we recall without proofs some of useful  facts on Lebesgue space
$L^{1}(\mathbb{R}_{+})$, the superposition operator, contractions, ($ws$)-compact operators and  measures of weak noncompactness.
\subsection{The Lebesgue Space}

Let $\mathbb{R}$ be the set of real numbers and let $\mathbb{R}_{+}$ be the interval $[0,+\infty)$.
For a fixed Lebesgue measurable subset $I$ of $\mathbb{R}$, let $meas(I)$
be the Lebesgue measure of $I$ and denote by $L^{1}(I)$ the space of Lebesgue
integrable functions on $I$, equipped with the standard norm
 
 $$\|x\|_{I}=\|x\|_{L^{1}(I)}=\int_I|x(t)|\;dt.$$
In the case when $I=\mathbb{R}_{+}$ the norm $\|x\|_{L^{1}(\mathbb{R}_{+})}$
will be briefly denoted by $\|x\|$. Now, let us recall the following criterion 
of weak noncompactness in the space $L^{1}(\mathbb{R}_{+})$ established by Dieudonne 
\cite{dieud}. It will be frequently used in our discussions. 
\begin{theorem}\label{bs}
A bounded set $X$ is relatively weakly compact in $L^1(\mathbb{R}_{+})$ if and only if the following two conditions are satisfied:\\
{\rm{(a)}} for any $\epsilon >0$ there exists $\delta >0$ such that if $meas (D)\leq \delta$ then $\Int_D \left| x(t)\right| dt \leq \epsilon$ for all $x\in X$.\\
{\rm{(b)}} for any $\epsilon >0$ there exists $\tau >0$ such that $\Int_\tau^\infty \left| x(t)\right| dt \leq \epsilon$ for any $x\in X$. 
\end{theorem}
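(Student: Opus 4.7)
The plan is to reduce Dieudonn\'e's criterion to the classical Dunford-Pettis theorem for $\sigma$-finite measure spaces, which asserts that a bounded subset $X\subseteq L^1(\mathbb{R}_+)$ is relatively weakly compact if and only if (i) the family is uniformly absolutely continuous (which is exactly condition (a) of the statement), and (ii) the family is tight, meaning that for every $\epsilon>0$ there exists a set $E\subseteq \mathbb{R}_+$ of finite Lebesgue measure with $\int_{E^c}|x(t)|\,dt\le \epsilon$ uniformly in $x\in X$. Once this black box is in hand, the whole task reduces to showing that, under the presence of (a), condition (ii) is equivalent to condition (b).

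The direction (b)$\Rightarrow$(ii) is immediate: choose $E=[0,\tau]$. For the converse, assume (a) and (ii) hold. Given $\epsilon>0$, pick $E$ of finite measure supplied by (ii) and $\delta>0$ supplied by (a), both for tolerance $\epsilon/2$. Since $\mathrm{meas}(E)<\infty$, the measures of the tail intersections $E\cap[\tau,\infty)$ tend to zero as $\tau\to\infty$, so we can choose $\tau$ large enough that $\mathrm{meas}(E\cap[\tau,\infty))\le\delta$. Condition (a) then forces $\int_{E\cap[\tau,\infty)}|x|\,dt\le\epsilon/2$ uniformly, and combining with $\int_{E^c}|x|\,dt\le\epsilon/2$ gives $\int_\tau^\infty |x|\,dt\le\epsilon$ uniformly, yielding (b).

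The substantive work therefore lies in the Dunford-Pettis theorem itself. For sufficiency I would apply Eberlein-\v Smulian: given a sequence $(x_n)\subseteq X$, condition (a) restricted to each $[0,N]$ provides uniform integrability on a finite measure space, the classical Dunford-Pettis theorem yields relative weak compactness in $L^1([0,N])$, and a diagonal extraction produces a subsequence whose restrictions $x_n\chi_{[0,N]}$ converge weakly for every $N$. Consistency of the limits defines a candidate $x\in L^1(\mathbb{R}_+)$, whose global integrability comes from the uniform $L^1$-bound together with (b); weak convergence $x_n\rightharpoonup x$ in the full space is then checked by testing against $\varphi\in L^\infty(\mathbb{R}_+)$, splitting the integral at a threshold $\tau$ and using (b) to make the tails uniformly negligible.

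The main obstacle is the necessity direction, which I would handle via Vitali-Hahn-Saks applied to the signed measures $\nu_n(D)=\int_D x_n\,dt$ of a hypothetical counter-sequence. The subtlety is that hypotheses (a) and (b) are phrased with absolute values, while weak convergence only carries information about $x_n$ itself: one must decompose $x_n=x_n^+-x_n^-$ and refine the offending sets $D_n$ (respectively $[\tau_n,\infty)$) by intersecting them with $\{x_n\ge 0\}$ or $\{x_n<0\}$, so that the absolute-value bounds become one-sided bounds on $\nu_n$. After this bookkeeping, the uniform countable additivity given by Vitali-Hahn-Saks contradicts the assumed concentration of mass on sets of vanishing measure (for (a)) or on tails $[\tau_n,\infty)$ with $\tau_n\to\infty$ (for (b)), completing the argument.
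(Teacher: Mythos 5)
The paper does not prove this statement at all: it is recalled verbatim as a classical compactness criterion attributed to Dieudonn\'e (the section opens with ``we recall without proofs some \ldots facts''), so there is no in-paper argument to compare yours against. On its own merits, your outline is correct. The reduction of the theorem to the Dunford--Pettis criterion for $\sigma$-finite measure spaces is legitimate, and the only genuinely new content you need --- that under uniform absolute continuity (a) the tightness condition ``$\int_{E^c}|x|\le\epsilon$ for some finite-measure $E$'' is equivalent to the half-line tail condition (b) --- is handled correctly by splitting $[\tau,\infty)$ into $E\cap[\tau,\infty)$ (small measure, controlled by (a)) and $E^c\cap[\tau,\infty)$ (controlled by tightness). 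The sufficiency argument via Eberlein--\v{S}mulian, restriction to $[0,N]$, diagonal extraction, and tail control by (b) is standard and sound, as is the necessity argument via Vitali--Hahn--Saks applied to $\nu_n(D)=\int_D x_n\,dt$ with the sign-set refinement to convert one-sided bounds into bounds on $\int_D|x_n|$. Be aware that what you have written is a proof \emph{scheme} delegating the substance to three named theorems rather than a self-contained proof; that is a perfectly reasonable way to establish the result, and arguably more informative than the paper's bare citation, but a referee would expect the Dunford--Pettis step either to be cited as a black box explicitly or to be written out in the detail your last two paragraphs only gesture at.
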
 
\subsection{The Superposition Operator}

For a fixed interval $I\subset\mathbb{R}$, bounded or not, consider a function
$f:I\times\mathbb{R}\rightarrow\mathbb{R}$. The function $f=f(t,x)$ is said to satisfy
the Carath\'eodory conditions if it is  Lebesgue measurable in $t$ for every fixed
 $x\in\mathbb{R}$ and continuous in $x$ for almost every $t\in I$. 
The following theorem due to Scorza Dragoni \cite{SD} explains the structure of functions 
satisfying Carath\'eodory conditions.
\begin{theorem}\label{Cara}
Let $I$ be a bounded interval and let $f:I\times\mathbb{R}\longrightarrow\mathbb{R}$
be a function satisfying Carath\'eodory conditions. Then, for each $\epsilon>$ there exists a closed
subset $D_{\epsilon}$ of the interval $I$ such that $meas(I\backslash D_{\epsilon})\leq\epsilon$
and $f|_{D_{\epsilon}\times\mathbb{R}}$ is continuous.
\end{theorem}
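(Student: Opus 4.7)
The plan is to derive the Scorza Dragoni property by combining Lusin's theorem (giving continuity in $t$ at a countable dense family of $x$-values) with Egoroff's theorem (upgrading the a.e.\ continuity in $x$ to a uniformly controlled modulus), and then stitching these two pieces together to obtain joint continuity on the intersection.

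First I would fix a countable dense set $\{r_n\}_{n\geq 1}\subset\mathbb{R}$. For each $n$, the map $t\mapsto f(t,r_n)$ is measurable on the bounded interval $I$, so Lusin's theorem produces a closed $F_n\subset I$ with $meas(I\setminus F_n)\leq \epsilon/2^{n+1}$ on which this partial function is continuous. Setting $E_1=\bigcap_n F_n$ gives a closed subset of $I$ with $meas(I\setminus E_1)\leq \epsilon/2$ on which every $t\mapsto f(t,r_n)$ is continuous.

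Next, the key step: upgrading pointwise continuity in $x$ to a quantitative modulus. For each $k\in\mathbb{N}$ define
$$\omega_k(t)=\sup\bigl\{|f(t,r)-f(t,r')|:r,r'\in [-k,k]\cap\mathbb{Q},\ |r-r'|\leq 1/k\bigr\},$$
which is measurable as a countable supremum of measurable functions. For a.e.\ $t$, $x\mapsto f(t,x)$ is continuous and hence uniformly continuous on $[-k,k]$, so $\omega_k(t)\to 0$ a.e.\ as $k\to\infty$. Egoroff's theorem then yields a closed $E_2\subset I$ with $meas(I\setminus E_2)\leq \epsilon/2$ on which $\omega_k\to 0$ uniformly.

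Finally I take $D_\epsilon=E_1\cap E_2$, which is closed with $meas(I\setminus D_\epsilon)\leq \epsilon$. Given $(t_0,x_0)\in D_\epsilon\times\mathbb{R}$ and $(t_n,x_n)\to(t_0,x_0)$ in $D_\epsilon\times\mathbb{R}$, picking a rational $r$ close to $x_0$ allows the uniform decay of $\omega_k$ on $E_2$ to control both $|f(t_n,x_n)-f(t_n,r)|$ and $|f(t_0,x_0)-f(t_0,r)|$, while continuity on $E_1$ of $t\mapsto f(t,r)$ controls $|f(t_n,r)-f(t_0,r)|$; a triangle inequality then gives joint continuity. The main obstacle I expect is precisely the modulus-construction step: verifying measurability of $\omega_k$ and its a.e.\ convergence to zero (i.e., that pointwise continuity in $x$ really does produce a usable uniform modulus through a countable dense set) is the delicate point, after which Lusin, Egoroff and the triangle inequality close the argument routinely.
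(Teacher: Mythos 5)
The paper itself gives no proof of this statement: it is the classical Scorza Dragoni theorem, quoted from the reference \cite{SD} in a preliminaries section that the authors explicitly present ``without proofs.'' So your attempt can only be judged on its own merits, and while the overall strategy (Lusin on a countable dense set of $x$-values, Egoroff to get a uniform modulus of continuity in $x$, then a triangle inequality) is indeed the standard route to this result, the modulus-construction step --- which you yourself flag as the delicate point --- is broken as written.

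The problem is that your $\omega_k$ couples the size of the window $[-k,k]$ to the gap $1/k$ through a single index. The inference ``$f(t,\cdot)$ is uniformly continuous on $[-k,k]$, hence $\omega_k(t)\to 0$'' is invalid, because the modulus of uniform continuity on $[-k,k]$ degrades as $k$ grows. Concretely, take $f(t,x)=x^2$, which satisfies the Carath\'eodory conditions: with $r=k$ and $r'=k-1/k$ one gets $|r^2-(r')^2|=2-1/k^2$, so $\omega_k(t)\ge 2-1/k^2\to 2$ for every $t$, and $\omega_k$ does not tend to zero anywhere. Egoroff therefore has nothing to act on. The repair is standard: decouple the two roles by setting $\omega_{N,j}(t)=\sup\{|f(t,r)-f(t,r')|:r,r'\in[-N,N]\cap\mathbb{Q},\ |r-r'|\le 1/j\}$; for each \emph{fixed} $N$ one does have $\omega_{N,j}(t)\to 0$ as $j\to\infty$ for a.e.\ $t$, so Egoroff applies with budget $\epsilon/2^{N+2}$ for each $N$, and one intersects over $N$. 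Since joint continuity at $(t_0,x_0)$ only requires control on a bounded neighbourhood of $x_0$, contained in some $[-N,N]$, this two-index version suffices for your final triangle-inequality argument. Two smaller points you should also tighten: Egoroff produces a measurable set, not a closed one, so you must pass to a closed subset by inner regularity of Lebesgue measure; and you should explicitly discard the null set of $t$ at which $x\mapsto f(t,x)$ fails to be continuous, since otherwise the supremum over rationals in $\omega_{N,j}$ does not control $|f(t,x)-f(t,r)|$ for irrational $x$ in the last step.
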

The \textit{superposition 
operator} (or \textit{Nemytskii operator}) associated with $f$ is defined as follows.
\begin{definition}\label{sod}
Let $f:I\times\mathbb{R}\rightarrow\mathbb{R}$ be a Carath\'eodory function. 
The superposition operator generated by $f$ is the operator $N_{f}$ which assigns
to each real measurable function on $I$ the real function $(N_{f}x)(t)=f(t,x(t))$, $t\in I$.
\end{definition}
Appell and Zabrejko \cite{ApZa} realized a thorough study of superposition 
operators in which several types of results are presented. Among them, we recall 
the following fundamental theorem which gives a necessary and sufficient condition 
ensuring that $N_{f}$ maps continuously  $L^{1}(I)$ into itself when $I$ is an unbounded interval.
The authors generalized the result proved by Krasnosel'skii \cite{Kr} when $I$ is a bounded interval  
\begin{theorem}\label{sup}
The superposition operator $N_f$ generated by the function $f$ maps the space $L^1(I)$ continuously into itself if
and only if

$$|f(t,x)|\leq a(t)+b|x|,$$
for all $t\in I$ and all $x\in\mathbb{R}$, where $a\in L^1(I)$ and $b\geq 0$ is a constant.
\end{theorem}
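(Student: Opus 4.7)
The equivalence splits into a routine sufficiency and a more delicate necessity. For the sufficiency, assume $|f(t,x)|\le a(t)+b|x|$ with $a\in L^{1}(I)$ and $b\ge 0$. Since $f$ is Carath\'eodory, $N_fx$ is measurable whenever $x$ is, and the triangle inequality gives
$$\|N_fx\|_{L^{1}(I)}\le \|a\|_{L^{1}(I)}+b\,\|x\|_{L^{1}(I)},$$
so $N_f$ is well defined as a map $L^{1}(I)\to L^{1}(I)$. Continuity then reduces to a standard subsequence argument: given $x_n\to x$ in $L^{1}(I)$, pass to a subsequence converging to $x$ almost everywhere; Carath\'eodory continuity of $f$ in the second variable yields $f(t,x_{n_k}(t))\to f(t,x(t))$ pointwise a.e. $L^{1}$-convergent sequences are both uniformly integrable and uniformly tight in the sense of Theorem \ref{bs}, and the pointwise bound $|N_fx_{n_k}(t)|\le a(t)+b|x_{n_k}(t)|$ transfers these properties to $\{N_fx_{n_k}\}$. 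Vitali's convergence theorem then gives $L^{1}$-convergence along the subsequence, and an Urysohn-type ``every subsequence has a sub-subsequence'' argument lifts this to convergence of the full sequence.

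For the necessity the plan is to argue by contradiction. Assume $N_f$ maps $L^{1}(I)$ continuously into itself but that no admissible pair $(a,b)$ exists. First I would upgrade continuity to local boundedness: since $N_f(0)=f(\cdot,0)\in L^{1}(I)$ and $N_f$ is continuous at $0$, there are $\delta,M>0$ with $\|x\|_{L^{1}(I)}\le \delta$ implying $\|N_fx\|_{L^{1}(I)}\le M$. Next, combining Theorem \ref{Cara} applied on bounded subintervals of $I$ with a measure-theoretic selection, I would produce, for each integer $n$, a real number $\xi_n$ and a measurable set $E_n\subset I$ of positive measure such that the $E_n$ are pairwise disjoint, $\operatorname{meas}(E_n)\,|\xi_n|\le 2^{-n}$, and
$$\int_{E_n}|f(t,\xi_n)|\,dt\ge 1.$$
Setting $x=\sum_{n\ge 1}\xi_n\chi_{E_n}$, disjointness and the estimate $\operatorname{meas}(E_n)|\xi_n|\le 2^{-n}$ yield $x\in L^{1}(I)$, while the lower bounds accumulate to $\|N_fx\|_{L^{1}(I)}=\sum_n\int_{E_n}|f(t,\xi_n)|\,dt=+\infty$, contradicting the fact that $N_fx$ must lie in $L^{1}(I)$.

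The principal obstacle is this constructive step. The assumed failure of the growth bound only supplies pointwise violations at isolated pairs $(t,x)$, whereas I need an \emph{integral} lower bound on a set of positive measure together with the geometric smallness $\operatorname{meas}(E_n)|\xi_n|\le 2^{-n}$ that keeps $x$ in $L^{1}(I)$. Scorza-Dragoni is precisely the tool that turns pointwise inequalities into inequalities on a neighborhood, by restricting $f$ to a closed set on which it is genuinely continuous; the pointwise violation is then promoted to an integral violation by continuity of $f$ in $x$ uniformly on that closed set. A secondary subtlety specific to the unbounded case is to distribute the $E_n$ over successive bounded windows of $I$ so that the local application of Scorza-Dragoni and the local version of the contradiction hypothesis piece together into a single global counterexample $x$.
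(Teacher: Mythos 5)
The paper itself gives no proof of this statement: it is recalled from Appell--Zabrejko \cite{ApZa} (and Krasnosel'skii \cite{Kr} for bounded intervals), so your attempt can only be judged on its own terms. The sufficiency half is correct and standard: the growth bound shows $N_f$ maps $L^1(I)$ into itself, and your a.e.-subsequence/Vitali/sub-subsequence scheme works because a convergent sequence in $L^1$ is uniformly integrable and tight and the pointwise bound $|N_fx_{n_k}|\le a+b|x_{n_k}|$ transfers these properties to the images.

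The necessity half has a genuine gap, and it sits exactly where you locate it. The negation of the growth condition says only that for every pair $(a,b)$ there is some point $(t_0,\xi)$ with $|f(t_0,\xi)|>a(t_0)+b|\xi|$. Scorza--Dragoni (Theorem \ref{Cara}) propagates this to a relative neighbourhood $V$ of $t_0$ in $D_\epsilon$ on which $|f(t,\xi)|\ge|f(t_0,\xi)|-\eta$, but you have no lower bound whatsoever on $\mathrm{meas}(V)$, so you cannot secure $\int_{E_n}|f(t,\xi_n)|\,dt\ge 1$ while simultaneously keeping $\mathrm{meas}(E_n)\,|\xi_n|\le 2^{-n}$; nor does the quantified negation tell you which pair $(a,b)$ to violate at stage $n$ so that these two requirements cohere. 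The proposed disjointification by ``successive bounded windows'' also fails in general: the growth condition can fail with all the pathology concentrated in one bounded window (e.g. $f(t,x)=x^2\chi_{[0,1]}(t)$), so nothing forces the violations to spread out, and disjointness of the $E_n$ must be arranged differently. The idea that actually closes this step in Krasnosel'skii and Appell--Zabrejko is to replace pointwise violations by the measurable envelope $g_b(t)=\sup_{\xi\in\mathbb{Q}}\left[\,|f(t,\xi)|-b|\xi|\,\right]_+$ (measurable thanks to Carath\'eodory continuity in $\xi$): the growth condition holds if and only if $g_b\in L^1(I)$ for some $b$, so its failure is the \emph{integral} statement $\int_I g_b\,dt=\infty$ for every $b$. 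One then chooses a simple-function near-maximizer of $|f(t,\cdot)|-b|\cdot|$, shrinks its support until its $L^1$-norm equals a prescribed radius $r$, and plays this against the boundedness of $N_f$ on the ball $B_r$ --- the very local boundedness you correctly extract from continuity at $0$ but then never use --- to obtain a contradiction for $b$ large. Without such a selection mechanism, the sets $E_n$ and values $\xi_n$ in your construction do not exist as claimed.
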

\subsection{Contractions}
Let  $(X,d)$  be a metric space. It  is well known that a mapping $B:X\rightarrow X$
is called a \textit{strict contraction} if there exists $k\in(0,1)$ such that $d(Bx,By)\leq k\,d(x,y)$
for every $x,y\in X$. In the following definition, 
we recall the notion of a \textit{large contraction} introduced by Burton in \cite{BU96}.
\begin{definition}
Let $(X,d)$ be a metric space. We say that $B:X\rightarrow X$ is a large contraction 
if $d(Bx,By)< d(x,y)$ for $x$, $y\in X$ with $x\not = y$ and if $\forall \epsilon >0$ 
there exists $\delta <1$ such that $\left[ x,y\in X, d(x,y)\geq \epsilon \right] \Rightarrow d(Bx,By)\leq \delta d(x,y)$.
\end{definition}
Now, we recall the notion of a \textit{separate contraction} introduced by Liu and Li  \cite{LL06}
which is weaker than the \textit{strict contraction} and \textit{large contraction} in the sense that 
every \textit{strict contraction} is a \textit{separate contraction} and every \textit{large contraction} is a \textit{separate contraction}.
The same authors  gave in \cite{LL08} an example of a separate contraction which is not a strict contraction
and another example of a separate contraction which is not a large contraction.

\begin{definition}
Let $(X,d)$ be a metric space. We say that $B:X\rightarrow X$ is a separate contraction if there exist two functions $\varphi$, $\psi: \mathbb{R^+} \rightarrow \mathbb {R^+}$ satisfying:\\
{\rm{(1)}} $\psi(0)=0$, $\psi$ is strictly increasing,\\
{\rm{(2)}} $d(Bx,By)\leq \varphi (d(x,y))$,\\
{\rm{(3)}} $\psi (r)+ \varphi(r)\leq r$ for $r>0$.
\end{definition}
\subsection{(\textit{ws})-compact Operators} 

We recall the following definition  from \cite{JJ}.
\begin{definition}
 Let $M$ be a subset of a Banach space $X$. A continuous map $A:M\rightarrow X$ is said to be (ws)-compact if for any weakly convergent sequence $(x_n)_{n\in \mathbb N}$ in $M$ the sequence  $(Ax_n)_{n\in \mathbb N}$  has a strongly convergent subsequence in $X$. 
\end{definition}
From this definition it immediately follows  that a map $A$ is ($ws$)-compact if and only if it maps relatively weakly compact sets into relatively compact ones. 
\subsection{Measures of weak noncompactness}
We recall some basic facts concerning measures of weak noncompactness, see \cite{{BaRi}}. Let us assume that $E$ is an infinite
dimensional Banach space with norm $\|.\|$ and zero element $\theta$. Denote by $\mathfrak{M}_E$ the family of all nonempty
and bounded subsets of $E$ and by  $\mathfrak{N}_E^\mathcal{W}$ the subset of  $\mathfrak{M}_E$ consisting of all relatively 
weakly compact sets. For a subset $X$ of $E$, the symbol
${\rm Conv}X$ will denote the convex  hull with respect
to the norm topology. Finally, we denote by $B(x,r)$ the ball centered at $x$ and of radius $r$. We write
$B_r$ instead of $B(\theta,r)$.
\begin{definition}\label{d1} A mapping $\mu:{\mathfrak{M}}_E\rightarrow\mathbb{R}_+$ is
said to be a measure of weak noncompactness in $E$ if it
satisfies the following conditions:\\
{\rm{(1)}} The kernel of $\mu$ defined by ${\rm ker}\mu=\{X\in{\mathfrak{M}}_E:\mu(X)=0\}$ is nonempty and ${\rm
ker}\mu\subset{\mathfrak{N}}_E^\mathcal{W}$.\\ 
{\rm{(2)}} $X\subset Y\Rightarrow \mu(X)\leq\mu(Y)$.\\ 
{\rm{(3)}} $\mu({\rm Conv}X)=\mu(X)$.\\ 
{\rm{(4)}} $\mu(\lambda X+(1-\lambda)Y)\leq\lambda\;\mu(X)+(1-\lambda)\;\mu(Y)$ for
$\lambda\in[0,1]$.\\ 
{\rm{(5)}} If $(X_n)_{n\geq1}$ is a sequence of nonempty, weakly closed subsets of $E$ with $X_{1}$ bounded and  $X_{n+1}\subset X_n$ for $n=1,\;2,\;3,\;...$ and if
$\lim\limits_{n\rightarrow\infty}\mu(X_n)=0$ then $\displaystyle\bigcap_{n=1}^\infty
X_n$ is nonempty.
\end{definition}

The first important  measure of weak noncompactness in a concrete Banach space $E$ was defined by De Blasi \cite{Bl} as follows:

$$\beta(X)=\inf\{\varepsilon>0:{\rm there}\;{\rm is}\;{\rm a}\;{\rm weakly}\;{\rm compact}\;{\rm subset}\;Y\;{\rm of}\;E\;{\rm such}\;
{\rm that}\;X\subset Y+B_\varepsilon\}.$$
The De Blasi measure of weak  noncompactness  $\beta$ plays a significant role in nonlinear
analysis and has many applications,  \cite{BaGo,BaRi,Bl,LaLe}. It is worthwhile
to mention that it is rather difficult to express $\beta$ with a convenient formula
in a concrete Banach space $E$. Our problem under consideration (\ref{e1}) will be studied
in the Banach space $E=L^{1}(\mathbb{R}_{+})$. In  \cite{{BaKn2}}, Bana\'s and Knap
constructed a useful measure of weak noncompactness $\mu$ in the space 
$L^{1}(\mathbb{R}_{+})$. The following construction of $\mu$ is based on the 
criterion of weak noncompactness given in Theorem \ref{bs} due to \cite{dieud}: 
For  a bounded subset $X$ of $L^1(\mathbb{R}_+)$ we  define

\begin{equation}\label{mfe}
\mu(X)=c(X)+d(X),
\end{equation}
where

\begin{equation}\label{cdef}
c(X)=\lim_{\varepsilon\rightarrow 0}\left\{
\sup_{x\in X}\left\{
\sup\left[
\int_\Omega|x(t)|\;dt:\Omega\subset\mathbb{R}_+,\;{\rm meas(\Omega)\leq\varepsilon}
\right]
\right\}
\right\},
\end{equation}
and

\begin{equation}\label{ddef}
d(X)=\lim_{T\rightarrow\infty}\left\{
\sup\left[
\int_T^\infty|x(t)|\;dt:x\in X\right]
\right\}.
\end{equation}

\section{Assumptions, statement of results}\label{sthr}
In this section, we state the existence of solutions  to the functional integral
equation (\ref{e1}) in the space $L^1(\mathbb{R}_+)$.   
First, observe that the problem (\ref{e1}) may be  written in the form

\begin{equation}\label{e1re}
Ax+Bx=x,
\end{equation}
where $B=N_{g}T$ and $A=N_{f}UQ$, where $N_{g}$ and $N_{f}$ are 
the superposition operators associated to $g$ and $f$ respectively (see Definition \ref{sod}) and $U$ 
is the operator defined by 

\begin{equation}\label{U}
(Ux)(t)=\int^{t}_{0}k(t,s)u(t,s,x(s))\;ds.
\end{equation} 
Our aim is to prove that $A+B$ has a fixed point in  $L^1(\mathbb{R}_+)$
by applying Theorem \ref{theow2}. We consider Eq. (\ref{e1}) under the following assumptions:

\begin{enumerate}[label=(A\arabic*)]
\item The functions $g,f:\mathbb{R}_+\times\mathbb{R}\rightarrow\mathbb{R}$ 
satisfies Carath\'eodory conditions and there are  constants  
$b,b_{1}>0$ and  functions  $a, a_{1} \in L^{1}(\mathbb{R}_+)$ such that

$$|g(t, x)| \leq a(t) + b|x|,$$
$$|f(t,x)|\leq a_1(t)+b_1|x|,$$
for $t\in \mathbb{R}_+$ and $x\in \mathbb{R}$. 
\item The operator $Q$ maps continuously the space $L^{1}(\mathbb{R}_{+})$ into itself and there 
are  constants  $\rho_{1},\rho_{2}>0$, 
functions  $\gamma_{1},\gamma_{2} \in L^1(\mathbb{R}_+)$ and 
increasing functions $\phi,\psi: \mathbb{R}_+ \rightarrow \mathbb{R}_+$, 
absolutely continuous such that

$$|(Tx)(t)| \leq \gamma_{1}(t) + \rho_{1}|x(\phi(t))|,$$
$$|(Qx)(t)| \leq \gamma_{2}(t) + \rho_{2}|x(\psi(t))|,$$
for $x\in L^{1}(\mathbb{R}_+)$ and $t\in \mathbb{R}_{+}$. Moreover, there are  constants  $m,M>0$ such that $\phi '(t)\geq m$ and $\psi '(t)\geq M$ for almost all $t\geq 0$. 
\item The mapping $B=N_{g}T$ is a separate contraction.
\item The function $u:\mathbb{R}_+\times\mathbb{R}_+\times\mathbb{R}\rightarrow\mathbb{R}$ 
satisfies Carath\'eodory conditions, i.e., the function $t\rightarrow u(t,s,x)$ is measurable
on $\mathbb{R}_+$ for every fixed $(s,x)\in \mathbb{R}_+\times\mathbb{R}$ and 
the function $(s,x)\rightarrow u(t,s,x)$ is continuous on $\mathbb{R}_+\times\mathbb{R}$ 
for almost every $t\in \mathbb{R}_+$.
\item There are  constants 
$\beta,\lambda>0$,  functions $\alpha,\gamma \in L^1(\mathbb{R}_+)$  
and a  function $h:\mathbb{R}_+\rightarrow\mathbb{R}_+$ which is measurable
with $\Lim_{\delta\rightarrow0}h(\delta)=0$, such that 

\begin{equation}\label{Uin}
\left|u(t,s,x)\right|\leq \alpha(s)+\beta|x|,
\end{equation}
for all $t,s\in \mathbb{R}_+$, $x\in \mathbb{R}$, and 

$$\left|u(t,s,x)-u(t+\delta,s,x)\right|\leq h(\delta)\left[\gamma(s)+\lambda|x|\right],$$
	for any $t, s\in \mathbb{R}_+$, $x\in \mathbb{R}$ and $\delta$ small.
	\item The function $k:\mathbb{R}_+\times\mathbb{R}_+\rightarrow\mathbb{R}$ is measurable such that the linear Fredholm integral operator

$$(Kx)(t)=\int_0^t \left|k(t,s)\right|x(s)\;ds$$
is a continuous map from $L^1(\mathbb{R}_+)$ into itself.
	\item $\gamma=b\,\rho_{1}m^{-1}+b_{1}\rho_{2}\beta M^{-1} \left\|K\right\|<1$, where $\|K\|$ denotes the norm of the operator $K$. 
\end{enumerate}



Now, we are in a position to present our  existence result.
\begin{theorem}\label{mth1} 
Let assumptions {\rm{(A1)-(A7)}} be satisfied. Then, equation {\rm{(\ref{e1})}}
has at least one solution $x\in L^1(\mathbb{R}_+)$. 
\end{theorem}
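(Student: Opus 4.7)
The plan is to apply the Krasnosel'skii type fixed point Theorem \ref{theow2} with $B = N_g T$, $A = N_f U Q$, and $\mathcal{M} = B_r$ (the closed ball of radius $r$ in $L^1(\mathbb{R}_+)$) for a suitably chosen $r > 0$. Four things must be verified: invariance of $\mathcal{M}$ under $A+B$, separate contractivity of $B$, the measure-of-weak-noncompactness inequality, and $(ws)$-compactness of $A$. The separate contraction property is free from (A3), so the real work lies in the other three conditions.

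For the invariant ball, I would combine the growth estimates of (A1), (A2), (A5), (A6) with the change of variables $u=\phi(t)$ (resp.\ $u=\psi(t)$), which together with $\phi'\ge m$, $\psi'\ge M$ gives $\int|x(\phi(t))|\,dt\le m^{-1}\|x\|$ and $\int|x(\psi(t))|\,dt\le M^{-1}\|x\|$. Unrolling $|(Bx)(t)|\le a(t)+b\gamma_1(t)+b\rho_1|x(\phi(t))|$ and $|(Ax)(t)|\le a_1(t)+b_1(K(\alpha+\beta|Qx|))(t)$, taking $L^1$-norms and using $\|K\|$, I obtain a bound $\|Ax+Bx\|\le C_0+\gamma\|x\|$ with the same $\gamma$ as in (A7). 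Choosing $r\ge C_0/(1-\gamma)$ secures $A\mathcal{M}+B\mathcal{M}\subseteq\mathcal{M}$.

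For the measure inequality with $\mu=c+d$ from (\ref{mfe}), I would use that both $c$ and $d$ are subadditive with respect to the algebraic sum, which reduces the problem to bounding $\mu(AS)$ and $\mu(BS)$. For $\mu(BS)$ the key is again the change of variable under $\phi$: restricting the integration to $\Omega$ of small measure, and using the absolute continuity of $\phi$ (so that $\phi(\Omega)$ also has controllable measure), I get $c(BS)\le b\rho_1 m^{-1}c(S)$; an analogous tail-decay argument yields the same bound for $d(BS)$. For $\mu(AS)$ one peels off the constant/integrable part with (A1) and (A5) and passes the remainder through the linear operator $K$, using continuity of $K$ on $L^1(\mathbb{R}_+)$ together with the $\psi$-change of variable for $Q$, giving $\mu(AS)\le b_1\beta\rho_2 M^{-1}\|K\|\,\mu(S)$. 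Adding the two gives $\mu(AS+BS)\le\gamma\mu(S)$ with $\gamma<1$.

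The main obstacle is proving that $A=N_f U Q$ is $(ws)$-compact. Continuity of $A$ is routine: $Q$ is continuous by (A2), $N_f$ is continuous on $L^1$ by Theorem \ref{sup} via (A1), and the continuity of $U$ is extracted from (A4), (A5), (A6) through a standard Carathéodory/dominated convergence argument. The delicate part is to show that for a weakly convergent sequence $(x_n)\subset\mathcal{M}$, the sequence $(Ax_n)$ admits a norm-convergent subsequence. Since $N_f$ is only norm-continuous, it suffices to prove relative norm-compactness of $\{UQx_n\}$ in $L^1(\mathbb{R}_+)$. I would first use Theorem \ref{bs} together with the bounds from (A5), (A6) to get relative weak compactness of $\{UQx_n\}$; then I would exploit the translation estimate $|u(t,s,x)-u(t+\delta,s,x)|\le h(\delta)[\gamma(s)+\lambda|x|]$ from (A5) to obtain equi-continuity in mean of $\{UQx_n\}$ (control of $\int|(UQx_n)(t+\delta)-(UQx_n)(t)|\,dt$ as $\delta\to 0$), and combine this with the tail decay from (A5), (A6) via a Kolmogorov--Riesz / Vitali type criterion to upgrade weak to strong relative compactness. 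Once all four items are in place, Theorem \ref{theow2} yields a fixed point $x\in\mathcal{M}$ satisfying (\ref{e1}).
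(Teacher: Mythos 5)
Your Steps 1 and 2 (the invariant ball $B_r$ with $r=C/(1-\gamma)$ and the inequality $\mu(AS+BS)\le\gamma\mu(S)$ obtained by treating $c$ and $d$ separately) follow the paper's argument essentially verbatim, and your reduction of the $(ws)$-compactness of $A=N_fUQ$ to the relative norm-compactness of $\{UQx_n\}$ is also the right move. The gap is in how you propose to obtain that compactness. You invoke a Kolmogorov--Riesz criterion and claim that the translation estimate $|u(t,s,x)-u(t+\delta,s,x)|\le h(\delta)[\gamma(s)+\lambda|x|]$ yields equicontinuity in mean of $\{UQx_n\}$. It does not: writing out $(UQx_n)(t+\delta)-(UQx_n)(t)$, that estimate controls only the term $\int_0^t k(t,s)\bigl[u(t+\delta,s,(Qx_n)(s))-u(t,s,(Qx_n)(s))\bigr]\,ds$; the remaining terms $\int_0^t\bigl[k(t+\delta,s)-k(t,s)\bigr]u(t+\delta,s,(Qx_n)(s))\,ds$ and $\int_t^{t+\delta}k(t+\delta,s)u(t+\delta,s,(Qx_n)(s))\,ds$ involve the $t$-dependence of the kernel, and assumption (A6) supplies only measurability of $k$ and boundedness of $K$ on $L^1$ --- no modulus of continuity of $k$ in $t$. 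This is precisely the hard part of the compactness proof, and your sketch is silent on it.

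The omission is repairable but requires real work: one would need the identity $\operatorname{ess\,sup}_s\int_s^\infty|k(t,s)|\,dt\le\|K\|$, continuity of translations in $L^1$ for the sections $k(\cdot,s)$, and an Egorov plus uniform-integrability argument to pass this through the $n$-dependent weights $\alpha(s)+\beta|(Qx_n)(s)|$ before Kolmogorov--Riesz applies. The paper avoids all of this by a different device: using Scorza Dragoni (Theorem \ref{Cara}) it extracts a closed set $D_\epsilon\subset[0,\tau]$ of nearly full measure on which $f$ and $k$ are continuous, proves via Ascoli--Arzel\`a that $A(Z)$ is relatively compact in $C(D_\epsilon)$ (Lemma \ref{lws}), and then recovers that $(Ax_n)$ is Cauchy in $L^1$ by splitting $[0,\infty)$ into $D_{p_0}$, the small exceptional set $D'_{p_0}$ (controlled by uniform integrability from Theorem \ref{bs}), and the tail $[\tau,\infty)$. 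Either supply the kernel-translation argument to complete your route, or adopt the Lusin-type localization.
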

\begin{remark}
Under the assumption {\rm{(A2)}}, the operators $T$ and $Q$ map $L^{1}(\mathbb{R}_{+})$ into itself.
Moreover, the operator $Q$ is assumed to be continuous. However, the operator $T$ is not assumed to be continuous.
\end{remark}

To prove the main result, we demonstrate the continuity of $A$ 
and we establish  $L^{1}(I)$-estimates for any nonempty measurable subset $I$ of $\mathbb{R}_{+}$.

\begin{proposition}\label{Acont}\rm{
Suppose that assumptions (A1)-(A4), (A6) and  the inequality (\ref{Uin}) from (A5) hold. Then  
\begin{enumerate}[label=(\alph*)]
\item The operator $A$ maps $L^{1}(\mathbb{R}_{+})$ continuously into itself. 
\item For any nonempty measurable subset $I$ of $\mathbb{R}_{+}$ and 
$x\in  L^{1}(\mathbb{R}_{+})$ we have the following estimations

\begin{equation}\label{Aest}
\left\|A x\right\|_{I}\leq \left\|a_{1}\right\|_{I}+b_{1}\left\|K\right\|\left[\left\|\alpha\right\|_{I}+\beta\left\|\gamma_{2}\right\|_{I}+\beta M^{-1}\rho_{2}\left\|x\right\|_{\psi(I)}\right]
\end{equation}
and 

\begin{equation}\label{Best}
\left\|B x\right\|_{I}\leq \left\|a\right\|_{I}+b\left[\left\|\gamma_{1}\right\|_{I}+\rho_{1}m^{-1}\left\|x\right\|_{\phi(I)}\right]
\end{equation}
\end{enumerate}}
\end{proposition}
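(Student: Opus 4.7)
The plan is to exploit the decomposition $A = N_f \circ U \circ Q$, where $N_f$ is the superposition operator generated by $f$ and $U$ is the operator defined in (\ref{U}), so that both continuity and the $L^1(I)$-estimates reduce to properties of each of the three factors.

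For part (a), (A2) already delivers continuity of $Q$ on $L^1(\mathbb{R}_+)$ and (A1) combined with Theorem \ref{sup} gives continuity of $N_f$, so the task reduces to proving that $U$ is continuous on $L^1(\mathbb{R}_+)$. Well-definedness is immediate from the growth bound (\ref{Uin}), which yields the pointwise majorization
\[
|(Ux)(t)| \leq \bigl(K(\alpha + \beta|x|)\bigr)(t),
\]
so (A6) puts $Ux \in L^1(\mathbb{R}_+)$. For continuity, I would take $x_n \to x$ in $L^1(\mathbb{R}_+)$ and use the standard subsequence device: passing to a subsequence, $x_n \to x$ a.e. on $\mathbb{R}_+$, and the Carath\'eodory condition (A4) forces $u(t,s,x_n(s)) \to u(t,s,x(s))$ a.e., so dominated convergence in the inner integral produces $(Ux_n)(t) \to (Ux)(t)$ pointwise a.e. To upgrade this to $L^1$-convergence I would invoke Theorem \ref{bs}: the norm-convergent sequence $\{\alpha + \beta|x_n|\}$ is relatively weakly compact, and the weak continuity of the bounded linear operator $K$ transfers relative weak compactness to $\{K(\alpha+\beta|x_n|)\}$, which dominates $\{|Ux_n|\}$; hence $\{Ux_n\}$ satisfies the Dieudonn\'e conditions (a)--(b) of Theorem \ref{bs}, and a Vitali-type argument yields $\|Ux_n - Ux\| \to 0$.

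For part (b), the estimates follow by direct integration and change of variables. For (\ref{Best}), (A1) and (A2) give
\[
\|Bx\|_I \leq \int_I \bigl(a(t) + b|(Tx)(t)|\bigr)\,dt \leq \|a\|_I + b\int_I \bigl(\gamma_1(t) + \rho_1|x(\phi(t))|\bigr)\,dt,
\]
and the substitution $\tau = \phi(t)$ with $\phi'(t) \geq m$ produces $\int_I|x(\phi(t))|\,dt \leq m^{-1}\|x\|_{\phi(I)}$, giving (\ref{Best}). For (\ref{Aest}) I would proceed analogously through the chain $|(Ax)(t)| \leq a_1(t) + b_1\bigl(K(\alpha + \beta|Qx|)\bigr)(t)$, integrate over $I$, invoke the $L^1$-boundedness of $K$, and apply the growth estimate on $Q$ from (A2) together with the change of variables $\sigma = \psi(s)$ (using $\psi'(s) \geq M$) to bound $\|Qx\|_I$ in terms of $\|\gamma_2\|_I$ and $\|x\|_{\psi(I)}$.

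The principal obstacle will be the continuity of $U$: since $u$ is controlled in its third variable only by the Carath\'eodory property (no Lipschitz bound nor modulus of continuity estimate), $L^1$-continuity cannot be obtained by any direct estimate of $\|Ux_n - Ux\|$. The argument must route through almost-everywhere subsequence convergence combined with the Dieudonn\'e weak-compactness criterion, pushed through $K$ by continuous linearity; all other steps are routine given the preliminary tools collected in Section \ref{stwo}.
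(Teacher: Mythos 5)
Your proposal is correct, and part (b) follows exactly the paper's computation (growth bounds from (A1)--(A2), the norm of $K$, and the change of variables $\theta=\psi(s)$, resp.\ $\theta=\phi(t)$, using $\psi'\geq M$, $\phi'\geq m$). Where you genuinely diverge is in the continuity of $U$ in part (a). The paper splits $\|Ux_n-Ux\|$ into a bounded-interval part and two tails: the tails are controlled because $Ux,K\alpha\in L^1$ and because $\{Kx_n\}$ is norm-convergent (hence relatively compact, hence has uniformly small tails by Theorem \ref{bs}), while the term on $[0,\tau]$ is handled by citing the majorant principle of Krasnosel'skii--Zabreiko to get continuity of $U$ on $L^1([0,\tau])$. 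You instead run an almost-everywhere subsequence argument through the Carath\'eodory condition (A4), dominate $|Ux_n|$ pointwise by $K(\alpha+\beta|x_n|)$, observe that this majorizing family is relatively (weakly) compact because $\alpha+\beta|x_n|$ converges in norm and $K$ is bounded linear, and close with the Dieudonn\'e criterion plus Vitali. Your route is more self-contained -- it replaces the black-box appeal to the majorant principle with an explicit uniform-integrability argument -- at the cost of the subsequence bookkeeping; the paper's route is shorter but leans on an external theorem. One small refinement you should make explicit: to justify dominated convergence in the inner integral $\int_0^t k(t,s)u(t,s,x_{n_k}(s))\,ds$, you need a single $L^1$ majorant for the $x_{n_k}$, which requires extracting a further subsequence with $|x_{n_k}|\leq g$ for some $g\in L^1$ (the standard byproduct of $L^1$-convergence); this is compatible with your subsequence device and does not affect the conclusion for the full sequence.
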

\begin{proof}[\bf Proof] First we prove that the operator $U$ given by (\ref{U}) maps $L^{1}=L^{1}(\mathbb{R}_{+})$ continuously into itself. 
In view of the inequality (\ref{Uin}) from assumption (A5) and the assumption (A6),  it is easy to observe that
 the operator $U$ transforms $L^{1}$  into itself. Now, let $\left\{x_{n}\right\}$
be a sequence in $L^{1}$ which converges to $x$ in $L^{1}$. We show that 
$\left\{Ux_{n}\right\}$ converges to $Ux$ in $L^{1}$. For every $\tau>0$, 
in view of our assumptions we have
 \begin{eqnarray*} 
\nonumber\left\|Ux_{n}-Ux\right\|_{L^{1}}&\leq& \int^{\tau}_{0}\left|(Ux_{n})(t)-(Ux)(t)\right|\,dt+\int^{\infty}_{\tau}\left|(Ux_{n})(t)\right|\,dt\\
\nonumber\;&&+\int^{\infty}_{\tau}\left|(Ux)(t)\right|\,dt\\
\nonumber\;&\leq& \delta_{n}+\int^{\infty}_{\tau}\int^{t}_{0}\left|k(t,s)\right|\left(\alpha(s)+\beta\left|x_{n}(s)\right|\right)ds\,dt\\
\nonumber\;&&+\int^{\infty}_{\tau}\left|(Ux)(t)\right|\,dt\\
\nonumber\;&=& \delta_{n}+\left\|K\alpha\right\|_{L^{1}([\tau,\infty))}+\beta\left\|Kx_{n}\right\|_{L^{1}([\tau,\infty))}+\left\|Ux\right\|_{L^{1}([\tau,\infty))}
\end{eqnarray*} 
where $\displaystyle\delta_{n}=\int^{\tau}_{0}\left|(Ux_{n})(t)-(Ux)(t)\right|\,dt$.

Now, since $Ux\in L^{1}$ and $K\alpha\in L^{1}$ we deduce that terms $\left\|K\alpha\right\|_{L^{1}([\tau,\infty))}$ and $\left\|Ux\right\|_{L^{1}([\tau,\infty))}$ are arbitrarily small provided $\tau$
is taken sufficiently large.

On the other hand, from continuity of the operator $K$ we conclude that $Kx_{n}$ converges to 
$Kx$ in $L^{1}$. Then, the sequence $\left\{Kx_{n}\right\}$ is relatively compact. In view
of Theorem \ref{bs} we infer that  the terms of the sequence $\left\{\left\|Kx_{n}\right\|_{L^{1}([\tau,\infty))}\right\}$
are arbitrarily small provided the number $\tau$ is large enough.

Keeping in mind the inequality (\ref{Uin}) from assumption (A5) and applying 
the so-called majorant principle ( see \cite{KZPS,ZKKM}), we conclude that the operator 
$U|_{L^{1}([0,\tau])}: L^{1}([0,\tau])\longrightarrow L^{1}([0,\tau])$ is continuous. Then, 
we deduce that $\delta_{n}$ converges to $0$ as $n$ goes to infinity.

 Therefore $Ux_{n}$ converges to $Ux$ in $L^{1}$. This means that the operator  $U$ maps $L^{1}(\mathbb{R}_{+})$  continuously into itself.
From this fact, assumption (A1), Theorem \ref{sup} and the continuity of $Q$
 we conclude that the operator $A=N_{f}UQ$ is continuous on the space $L^{1}$.

In the sequel, we prove the estimation (\ref{Aest}). For any nonempty measurable subset $I$ of $\mathbb{R}_{+}$ and $x\in  L^{1}$ we have
\begin{eqnarray*} \nonumber
\int_I|A x(t)|dt&\leq&\int_I a_{1}(t)dt+b_{1}\int_{I}\left|\int^{t}_{0}k(t,s)u(t,s,(Qx)(s)\,ds\right|dt\\
\nonumber\;&\leq&\int_I a_{1}(t)dt+b_{1}\int_{I}\left(\int^{t}_{0}\left|k(t,s)\right|\left(\alpha(s)+\beta\left|(Qx)(s)\right|\right)\,ds\right)dt\\
\nonumber\;&\leq&\int_I a_{1}(t)dt+b_{1}\int_{I}\left(\int^{t}_{0}\left|k(t,s)\right|\left(\alpha(s)+\beta\gamma_{2}(s)\right)\,ds\right)dt\\
\nonumber\;&&+b_{1}\beta\rho_{2}\int_{I}\left(\int^{t}_{0}\left|k(t,s)\right|\left(\left|x(\psi(s))\right|\right)\,ds\right)dt\\
\nonumber\;&=&\left\|a_{1}\right\|_{I}+b_{1}\left\|K\alpha\right\|_{I}+b_{1}\beta\left\|K\gamma_{2}\right\|_{I}+b_{1}\beta \rho_{2}\left\|Kx(\psi)\right\|_{I}\\
\nonumber\;&\leq&\left\|a_{1}\right\|_{I}+b_{1}\left\|K\right\|\left\|\alpha\right\|_{I}+b_{1}\beta\left\|K\right\|\left\|\gamma_{2}\right\|_{I}\\
\nonumber\;&&+b_{1}\beta \rho_{2}\left\|K\right\|\int_{I}\left|x(\psi(t))\right|\,dt\\
\nonumber\;&\leq&\left\|a_{1}\right\|_{I}+b_{1}\left\|K\right\|\left\|\alpha\right\|_{I}+b_{1}\beta\left\|K\right\|\left\|\gamma_{2}\right\|_{I}\\
\nonumber\;&&+b_{1}\beta \rho_{2}\left\|K\right\|M^{-1}\int_{I}\left|x(\psi(t))\right|\psi^{\prime}(t)\,dt\\
\nonumber\;&\leq&\left\|a_{1}\right\|_{I}+b_{1}\left\|K\right\|\left\|\alpha\right\|_{I}+b_{1}\beta\left\|K\right\|\left\|\gamma_{2}\right\|_{L^{1}(I)}\\
\nonumber\;&&+b_{1}\beta \rho_{2}M^{-1}\left\|K\right\|\int_{\psi(I)}x(\theta)\,d\theta
\end{eqnarray*}
Hence, we obtain the estimation (\ref{Aest}). In the same way,
the estimation (\ref{Best}) is  obtained using our assumptions on $g$ and $T$. \medskip
\end{proof}

Also, we need   the following lemma to prove our main result. In the proof of this lemma, we implement
a technique used in \cite{BaCh}.
\begin{lemma}\label{lws}
Let $Z$ be a nonempty, bounded and  relatively weakly compact set of $L^{1}(\mathbb{R}_{+})$
and $I_{\tau}=[0,\tau]$, where $\tau>0$. For any $\epsilon>0$ there exists a closed subset
$D_{\epsilon}$of the interval $I_{\tau}$ with $meas(I_{\tau}\backslash D_{\epsilon})\leq\epsilon$ such that the set
$A(Z)$ is relatively compact in the space $C(D_{\epsilon})$.
\end{lemma}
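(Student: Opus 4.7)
The plan is to apply the Arzel\`a-Ascoli theorem to the family $\{Ax|_{D_\epsilon}:x\in Z\}$ in $C(D_\epsilon)$, after constructing $D_\epsilon$ via Scorza-Dragoni. First I would apply Theorem \ref{Cara} to the Carath\'eodory function $f$ on the bounded interval $I_\tau$ to obtain a closed subset $D_\epsilon\subset I_\tau$ with $\text{meas}(I_\tau\setminus D_\epsilon)\le\epsilon$ on which $f$ is continuous; if needed, I would intersect this with further Scorza-Dragoni (or Lusin-type) sets arising from $k$ and $u$ restricted to finite rectangles, so as to secure continuity of the kernel data in the $t$-variable on $D_\epsilon$ while keeping the measure of the complement below $\epsilon$.

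The core of the argument is to show that the family $\{UQx:x\in Z\}$ is uniformly bounded and equicontinuous on $D_\epsilon$. Uniform boundedness follows from (A5), (A6), the $Qx$-estimate in (A2), and the $L^1$-boundedness of $Z$ (immediate from its weak compactness). For equicontinuity, I would split
\[
(UQx)(t+\delta)-(UQx)(t)=\int_0^t\bigl[k(t+\delta,s)u(t+\delta,s,(Qx)(s))-k(t,s)u(t,s,(Qx)(s))\bigr]\,ds+\int_t^{t+\delta}k(t+\delta,s)u(t+\delta,s,(Qx)(s))\,ds,
\]
and control the first piece using the $h(\delta)$-modulus in (A5), the continuity of the operator $K$ from (A6), and the uniform integrability of $Z$ guaranteed by its weak compactness (Theorem \ref{bs}); the endpoint term is handled by the same uniform integrability of the majorant $|k(t+\delta,\cdot)|\,(\alpha+\beta|Qx(\cdot)|)$. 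Composition with $f$, which is jointly continuous and hence uniformly continuous on the compact rectangle $D_\epsilon\times[-R,R]$ where $R$ bounds $|(UQx)(t)|$ uniformly for $(t,x)\in D_\epsilon\times Z$, then transports uniform boundedness and equicontinuity to $\{Ax:x\in Z\}$ on $D_\epsilon$, and Arzel\`a-Ascoli closes the argument.

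The principal obstacle I anticipate is the equicontinuity of $\{UQx\}_{x\in Z}$, since (A6) provides only operator continuity of $K$ and no direct modulus for the $t$-continuity of $k(t,s)$; additionally, the change-of-variable bound $\int_D|x(\psi(t))|\,dt\le M^{-1}\int_{\psi(D)}|x|$ does not immediately transfer uniform integrability from $Z$ to $Q(Z)$ because $\psi$ can dilate sets. Overcoming this will require extracting pointwise regularity of $k$ in $t$ on a set of nearly full measure (via a further Scorza-Dragoni or Lusin-type argument used to shrink $D_\epsilon$), coupling it with the uniform integrability of $Z$, and possibly invoking the majorant principle as in the proof of Proposition \ref{Acont}. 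This is essentially the technique of \cite{BaCh} to which the lemma alludes.
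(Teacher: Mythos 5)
Your proposal follows essentially the same route as the paper: Scorza--Dragoni applied to both $f$ and $k$ to produce $D_{\epsilon}$, equiboundedness and equicontinuity of $\{UQx : x\in Z\}$ on $D_{\epsilon}$ via the growth bound and the modulus $h(\delta)$ from (A5) together with the uniform integrability of $Z$, then transport through $f$ (uniformly continuous on the compact rectangle $D_{\epsilon}\times[0,U_{\epsilon}]$) and Arzel\`a--Ascoli. The two obstacles you flag are resolved exactly as you anticipate: the paper obtains a uniform modulus of continuity for $k$ by choosing $D_{\epsilon}$ so that $k|_{D_{\epsilon}\times I_{\tau}}$ is continuous (hence uniformly continuous on a compact set), and the dilation issue for $\psi$ is controlled by the absolute continuity of $\psi$ assumed in (A2), which makes $\psi(t_{2})-\psi(t_{1})$, and hence $\int_{\psi(t_{1})}^{\psi(t_{2})}|x(\theta)|\,d\theta$ uniformly over $x\in Z$, small with $t_{2}-t_{1}$.
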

\begin{proof}[\bf Proof] Let $\epsilon>0$. In view of Theorem \ref{Cara}, we can find a closed 
subset $D_{\epsilon}$ of the interval $I_{\tau}=[0,\tau]$ such that 
$meas(D^{\prime}_{\epsilon})\leq\epsilon$ (where $D^{\prime}_{\epsilon}=I_{\tau}\backslash D_{\epsilon}$)
 and such that $f|_{D_{\epsilon}\times \mathbb{R}}$ and $k|_{D_{\epsilon}\times I_{\tau}}$ are continuous.
In the sequel, we show that $A(Z)$ is equibounded and equicontinuous in the space $C(D_{\epsilon})$
in order to apply Ascoli-Arz\'ela theorem. Let us take an arbitrary $x\in Z$.
Then for every $t\in D_{\epsilon}$, we have 

\begin{eqnarray*} 
\nonumber\left|(UQx)(t)\right|&=& \left|\int^{t}_{0}k(t,s)u(t,s,(Qx)(s))\,ds\right|\\
\nonumber\;&\leq&\int^{t}_{0}\left|k(t,s)\right|(\alpha(s)+\beta\left|(Qx)(s)\right|)\,ds\\
\nonumber\;&\leq&\int^{t}_{0}\left|k(t,s)\right|(\alpha(s)+\beta\gamma_{2}(s)+\beta\rho_{2}\left|x(\psi(s))\right|)\,ds\\
\nonumber\;&\leq& \overline{k}\left(\left\|\alpha\right\|+\beta\left\|\gamma_{2}\right\|+\beta\rho_{2}\int^{t}_{0}\left|x(\psi(s))\right|\,ds\right)\\
\nonumber\;&\leq& \overline{k}\left(\left\|\alpha\right\|+\beta\left\|\gamma_{2}\right\|+\beta\rho_{2} M^{-1}\int^{t}_{0}\left|x(\psi(s))\right|\psi^{\prime}(s)\,ds\right)\\
\nonumber\;&=& \overline{k}\left(\left\|\alpha\right\|+\beta\left\|\gamma_{2}\right\|+\beta\rho_{2} M^{-1}\int^{\psi(t)}_{\psi(0)}\left|x(\theta)\right|\,d\theta\right)\\
\nonumber\;&\leq& \overline{k}\left(\left\|\alpha\right\|+\beta\left\|\gamma_{2}\right\|+\beta\rho_{2} M^{-1}\left\|x\right\|\right)\\
\;&\leq&\overline{k}\left(\left\|\alpha\right\|+\beta\left\|\gamma_{2}\right\|+\beta\rho_{2}\,\overline{z} M^{-1}\right)
\end{eqnarray*}
 where $\overline{k}=\max\left\{\left|k(t,s)\right|: (t,s)\in D_{\epsilon}\times I_{\tau}\right\}$ and
$\overline{z}=\sup\left\{\left\|x\right\|: x\in Z\right\}$. In the sequel, we will denote by $U_{\epsilon}$ the quantity

\begin{equation}\label{uepsilon}
U_{\epsilon}:=\overline{k}\left(\left\|\alpha\right\|+\beta\left\|\gamma_{2}\right\|+\beta\rho_{2}\,\overline{z} M^{-1}\right).
\end{equation}
Then, using the assumption (A1) we obtain

$$\left|(Ax)(t)\right|\leq a_{1}(t)+b_{1}\left|(UQx)(t)\right|\leq \overline{a}_{1}+b_{1}U_{\epsilon}$$
for every $t\in D_{\epsilon}$, where $\overline{a}_{1}=\sup\left\{a_{1}(t): t\in D_{\epsilon}\right\}$.
This proves that the set $A(Z)$ is equibounded on the set $D_{\epsilon}$. \\
Now, let us consider $t_{1},t_{2}\in D_{\epsilon}$, $t_{1}\leq t_{2}$ and $\delta=t_{2}-t_{1}$. 
For a fixed $x\in Z$, we denote $\mathcal{U}^{t_{1},t_{2}}_{x}=(UQx)(t_{2})-(UQx)(t_{1})$. 
Then, in view of our assumptions, we have
\begin{eqnarray*} 
\nonumber\,\left|\mathcal{U}^{t_{1},t_{2}}_{x}\right|&=&\left|\int^{t_{2}}_{0}k(t_{2},s)u(t_{2},s,(Qx)(s))\,ds-\int^{t_{1}}_{0}k(t_{1},s)u(t_{1},s,(Qx)(s))\,ds\right|\\
\nonumber\;&\leq& \int^{t_{1}}_{0}\left|k(t_{2},s)u(t_{2},s,(Qx)(s))-k(t_{1},s)u(t_{1},s,(Qx)(s))\right|\,ds\\
\nonumber\;&& +\int^{t_{2}}_{t_{1}}\left|k(t_{2},s)u(t_{2},s,(Qx)(s))\right|\,ds\\
\nonumber\;&\leq& \int^{t_{1}}_{0}\left|k(t_{2},s)u(t_{2},s,(Qx)(s))-k(t_{1},s)u(t_{2},s,(Qx)(s))\right|\,ds\\
\nonumber\;&& + \int^{t_{1}}_{0}\left|k(t_{1},s)u(t_{2},s,(Qx)(s))-k(t_{1},s)u(t_{1},s,(Qx)(s))\right|\,ds\\
\nonumber\;&& +\int^{t_{2}}_{t_{1}}\left|k(t_{2},s)u(t_{2},s,(Qx)(s))\right|\,ds\\
\nonumber\;&=& \int^{t_{1}}_{0}\left|k(t_{2},s)-k(t_{1},s)\right|.\left|u(t_{2},s,(Qx)(s))\right|\,ds\\
\nonumber\;&& + \int^{t_{1}}_{0}\left|k(t_{1},s)\right|.\left|u(t_{2},s,(Qx)(s))-u(t_{1},s,(Qx)(s))\right|\,ds\\
\nonumber\;&& +\int^{t_{2}}_{t_{1}}\left|k(t_{2},s)\right|.\left|u(t_{2},s,(Qx)(s))\right|\,ds\\
\nonumber\;&\leq& \int^{t_{1}}_{0}\left|k(t_{2},s)-k(t_{1},s)\right|\left[\alpha(s)+\beta \gamma_{2}(s)+\beta\rho_{2}\left|x(\psi(s))\right|\right]\,ds\\
\nonumber\;&& + \int^{t_{1}}_{0}\left|k(t_{1},s)\right|h(\delta)\left[\gamma(s)+\lambda \gamma_{2}(s)+\lambda\rho_{2}\left|x(\psi(s))\right|\right]\,ds\\
\nonumber\;&& +\int^{t_{2}}_{t_{1}}\left|k(t_{2},s)\right|\left[\alpha(s)+\beta \gamma_{2}(s)+\beta\rho_{2}\left|x(\psi(s))\right|\right]\,ds,
\end{eqnarray*}
if $\delta$ is taken small enough.
 Now, we denote by $w^{\tau}(k,.)$  the modulus of continuity of the function $k$ on the set 
$D_{\epsilon}\times I_{\tau}$ given by

$$w^{\tau}(k,\delta)=\sup\left\{\left|k(t,s)-k(t,\sigma)\right|: t\in D_{\epsilon} \text{ and }s,\sigma \in I_{\tau} \text{ with }\left|s-\sigma\right|\leq\delta\right\}.$$
Therefore, we obtain
\begin{eqnarray*} \nonumber
\nonumber\;\left|\mathcal{U}^{t_{1},t_{2}}_{x}\right|&\leq& w^{\tau}(k,\delta)\left(\int^{\tau}_{0}\left[\alpha(s)+\beta \gamma_{2}(s)+\beta\rho_{2}\left|x(\psi(s))\right|\right]\,ds\right)\\
\nonumber\;&& + \,\overline{k}\,h(\delta)\int^{\tau}_{0}\left[\gamma(s)+\lambda \gamma_{2}(s)+\lambda\rho_{2}\left|x(\psi(s))\right|\right]\,ds\\
\nonumber\;&& + \,\overline{k}\int^{t_{2}}_{t_{1}}\left[\alpha(s)+\beta \gamma_{2}(s)+\beta\rho_{2}\left|x(\psi(s))\right|\right]\,ds\\
\nonumber\;&\leq& w^{\tau}(k,\delta)\left(\left\|\alpha\right\|+\beta \left\|\gamma_{2}\right\|+\beta\rho_{2}M^{-1}\int^{\tau}_{0}\left|x(\psi(s))\right|\psi^{\prime}(s)\,ds\right)\\
\nonumber\;&& + \,\overline{k}\,h(\delta)\left(\left\|\gamma\right\|+\lambda \left\|\gamma_{2}\right\|+\lambda\rho_{2}M^{-1}\int^{\tau}_{0}\left|x(\psi(s))\right|\psi^{\prime}(s)\,ds\right)\\
\nonumber\;&& + \,\overline{k}\left(\int^{t_{2}}_{t_{1}}\left(\alpha(s)+\beta \gamma_{2}(s)\right)\,ds+\beta\rho_{2}M^{-1}\int^{t_{2}}_{t_{1}}\left|x(\psi(s))\right|\psi^{\prime}(s)\,ds\right)\\
\nonumber\;&=& w^{\tau}(k,\delta)\left(\left\|\alpha\right\|+\beta \left\|\gamma_{2}\right\|+\beta\rho_{2}M^{-1}\int^{\psi(\tau)}_{0}\left|x(\theta)\right|\,d\theta\right)\\
\nonumber\;&& + \,\overline{k}\,h(\delta)\left(\left\|\gamma\right\|+\lambda \left\|\gamma_{2}\right\|+\lambda\rho_{2}M^{-1}\int^{\psi(\tau)}_{0}\left|x(\theta)\right|\,d\theta\right)\\
\nonumber\;&& + \,\overline{k}\left(\int^{t_{2}}_{t_{1}}\left(\alpha(s)+\beta \gamma_{2}(s)\right)\,ds+\beta\rho_{2}M^{-1}\int^{\psi(t_{2})}_{\psi(t_{1})}\left|x(\theta)\right|\,d\theta\right)\\
\nonumber\;&\leq& w^{\tau}(k,\delta)\left(\left\|\alpha\right\|+\beta \left\|\gamma_{2}\right\|+\beta\rho_{2}M^{-1}\overline{z}\right)\\
\nonumber\;&& + \,\overline{k}\,h(\delta)\left(\left\|\gamma\right\|+\lambda \left\|\gamma_{2}\right\|+\lambda\rho_{2}M^{-1}\overline{z}\right)\\
\nonumber\;&& + \,\overline{k}\left(\int^{t_{2}}_{t_{1}}\left(\alpha(s)+\beta \gamma_{2}(s)\right)\,ds+\beta\rho_{2}M^{-1}\int^{\psi(t_{2})}_{\psi(t_{1})}\left|x(\theta)\right|\,d\theta\right)
\end{eqnarray*} 
Keeping in mind that $k|_{D_{\epsilon}\times I_{\tau}}$ is uniformly continuous, we conclude that
$w^{\tau}(k,\delta)$ is arbitrarly small provided that the number $\delta$ is small enough.\\
Absolute continuity of $\psi$ ensures that $\psi(t_{2})-\psi(t_{1})$ is small enough when we take $\delta$
small enough. Considering the fact that $Z$ is bounded, relatively weakly compact 
and using Theorem \ref{bs} we obtain that the elements of the set 

$$\left\{\int^{\psi(t_{2})}_{\psi(t_{1})}\left|x(\theta)\right|\,d\theta: x\in Z\right\},$$  
are uniformly arbitrarily small provided the number $\delta$ is small enough. \\
Similarly, the number $\displaystyle\int^{t_{2}}_{t_{1}}(\alpha(s)+\beta \gamma_{2}(s))\,ds$ is arbitrarly small 
provided the number $\delta$ is small enough.\\
The number $h(\delta)$ is  arbitrarly small provided the number $\delta$ is small enough, thanks to hypothesis $\Lim_{\delta\rightarrow0}h(\delta)=0$ from assumption (A5).\\
This proves that the set $U(Z)$ is equicontinuous in the space $C(D_{\epsilon})$. Hence, uniform 
continuity of $f|_{D_{\epsilon}\times\left[0,U_{\epsilon}\right]}$, where $U_{\epsilon}$
is given by (\ref{uepsilon}), implies that the
set $A(Z)$ is equicontinuous in the space $C(D_{\epsilon})$.\\
Therefore, $A(Z)$ is equibounded and equicontinuous in the space $C(D_{\epsilon})$. Then, by Ascoli-Arz\'ela
theorem we obtain that  $A(Z)$ is a relatively compact set in the space $C(D_{\epsilon})$.
\end{proof}

\section{Proof of Theorem \ref{mth1}}\label{sfo}

\begin{proof}[\bf Proof of Theorem \ref{mth1}]
  We prove that the operators $A$ and $B$ from Eq. (\ref{e1re})
satisfy the 
hypothesis of  Theorem \ref{theow2}. \\
\textbf{Step 1:} First we prove that there exists a positive number $r>0$ such that 

$$A(B_{r})+B(B_{r})\subseteq B_{r}.$$ 
Let $x,y\in L^{1}=L^{1}(\R_{+})$. From estimations (\ref{Aest}) and (\ref{Best}), we get
\begin{eqnarray} \nonumber
\left\|A x+B y\right\|&\leq&\left\|a_{1}\right\|+b_{1}\left\|K\right\|\left[\left\|\alpha\right\|+\beta\left\|\gamma_{2}\right\|+\beta M^{-1}\rho_{2}\left\|x\right\|\right]\\
\nonumber\,&&+\left\|a\right\|+b\left[\left\|\gamma_{1}\right\|+\rho_{1}m^{-1}\left\|y\right\|\right]\\
 &\leq &C+b_{1}\rho_{2}\beta M^{-1}\left\|K\right\|\left\|x\right\|+b\rho_{1}m^{-1}\left\|y\right\|, \label{ABestxy}
\end{eqnarray}
where $C=\left\|a_{1}\right\|+\left\|a\right\|+b_{1}\left\|K\right\|(\left\|\alpha\right\|+\beta\left\|\gamma_{2}\right\|)+b\left\|\gamma_{1}\right\|$.\\
Let 

\begin{equation}\label{Ga}
\gamma=b\,\rho_{1}m^{-1}+b_{1}\rho_{2}\beta M^{-1} \left\|K\right\|,
\end{equation}
 and $r$ be the real number defined by
$r=\Frac{C}{1-\gamma}$. Thanks to hypothesis $(A7)$, we have $r>0$. Clearly if $x,y\in B_{r}$, then the estimation (\ref{ABestxy}) becomes

$$\left\|A x+B y\right\|\leq C+\gamma r=r.$$
 \textbf{Step 2:} Now, we show that there exists $\gamma \in [0,1[$ such that $\mu (AS+BS)\leq \gamma \mu(S)$ for every bounded subset $S$ of $L^1$, where $\mu$ is the measure of weak noncompactness defined by (\ref{mfe}). Let us fix a nonempty subset $S$ of $L^1$.\\  
 From estimations (\ref{Aest}) and (\ref{Best}), for every  nonempty measurable subset $I$ of $\mathbb{R}_+$  and for any $x\in S$  we have 
\begin{eqnarray} \nonumber
\left\|A x+B x\right\|_{I}&\leq&\left\|a_{1}\right\|_{I}+b_{1}\left\|K\right\|\left[\left\|\alpha\right\|_{I}+\beta\left\|\gamma_{2}\right\|_{I}+\beta M^{-1}\rho_{2}\left\|x\right\|_{\psi(I)}\right]\\
&&+\left\|a\right\|_{I}+b\left[\left\|\gamma_{1}\right\|_{I}+\rho_{1}m^{-1}\left\|x\right\|_{\phi(I)}\right], \label{ABestx}
\end{eqnarray}
Observe that the set consisting of one element of $L^1$ is weakly compact. Then, from  Theorem \ref{bs} we conclude that 

$$\displaystyle\lim_{\epsilon\rightarrow0}\left\{sup\left[\left\|h\right\|_{I}:meas(I)\leq\epsilon\right]\right\}=0,$$
for $h\in\left\{a_{1}, \gamma_{1}, \gamma_{2}, \alpha, a\right\}$.\\
Therefore, using the definition (\ref{cdef}) and taking into account that the functions
$\psi$ and $\phi$ are supposed to be absolutely continuous, from the inequality (\ref{ABestx}) we obtain

\begin{equation}\label{cest}
c(AS+BS)\leq \gamma c(S),
\end{equation} 
where $\gamma$ is given by (\ref{Ga}). From assumption $(A7)$ we have $\gamma<1$.

Now, consider an arbitrary $\tau>0$. Taking $I=[\tau,\infty)$, the inequality (\ref{ABestx}) becomes
\begin{eqnarray*} \nonumber
\int^{\infty}_{\tau}\left|(A x)(t)+(B x)(t)\right|\,dt&\leq& \int^{\infty}_{\tau}a_{1}(t)\,dt+\int^{\infty}_{\tau}a(t)\,dt\\
\nonumber\;&&+b_{1}\left\|K\right\|\left(\int^{\infty}_{\tau}\alpha(t)dt+\beta\,\int^{\infty}_{\tau}\gamma_{2}(t)\,dt\right)\\
\nonumber\;&&+b_{1}\rho_{2}\beta M^{-1}\left\|K\right\|\int^{\infty}_{\psi(\tau)}\left|x(t)\right|\,dt\\
\nonumber\;&&+b\int^{\infty}_{\tau}\gamma_{1}(t)\,dt+b\,\rho_{1}m^{-1}\int^{\infty}_{\phi(\tau)}\left|x(t)\right|\,dt.
\end{eqnarray*}
Theorem \ref{bs} ensures that $d(\left\{h\right\})=0$ for $h\in\left\{a_{1}, \gamma_{1}, \gamma_{2}, \alpha, a\right\}$, where $d$ 
is defined by (\ref{ddef}). Then, the last estimate  leads to

\begin{equation}\label{dest}
d(AS+BS)\leq \gamma d(S).
\end{equation}
 Combining estimations (\ref{cest}) and (\ref{dest}) with the definition (\ref{mfe}) we obtain

$$\mu(AS+BS)\leq \gamma \mu(S).$$
\textbf{Step 3:} $A$ is $ws$-compact. From Proposition \ref{Acont}, the operator $A$ is continuous.
 Now, consider a weakly convergent sequence $\left\{x_{n}\right\}$ in $B_{r}$ and fix
a number $\epsilon>0$. 
Applying Theorem \ref{bs} for the relatively compact set $\left\{Ax_{n}:n\in\mathbb{N}\right\}$, 
we deduce that there exist $\tau>0$  and $\delta>0$ such that for any $n\in\mathbb{N}$ we have 

\begin{equation}\label{tauAxn}
\int^{\infty}_{\tau}\left|(Ax_{n})(t)\right|\,dt\leq\frac{\epsilon}{8},
\end{equation}
and 

\begin{equation}\label{deltaAxn}
\int_{D}\left|(Ax_{n})(t)\right|\,dt\leq\frac{\epsilon}{4},
\end{equation}
for each  subset of $\mathbb{R}$ such that $meas(D)\leq\delta$.\\
By using Lemma \ref{lws} for $Z=\left\{x_{n}:n\in\mathbb{N}\right\}$, for every $p\in \mathbb{N}$, there exists 
a closed subset $D_{p}$ of the interval $I_{\tau}=[0,\tau]$ with 
$meas(D^{\prime}_{p})\leq \frac{1}{p}$ such that 
$\left\{Ax_{n}:n\in\mathbb{N}\right\}$  is relatively compact  in the space $C(D_{p})$.
Passing to subsequences if necessary we can assume that $\left\{Ax_{n}\right\}$
is a Cauchy sequence in  $C(D_{p})$, for every $p\in \mathbb{R}$. Then, we can choose 
$p_{0}$ large enough such that $meas(D^{\prime}_{p_{0}})\leq\delta$ and for every 
$m,n\geq p_{0}$ 

\begin{equation}\label{Amn}
\left\|Ax_{n}-Ax_{m}\right\|_{C(D_{p_{0}})}=
\max_{t\in D_{p_{0}}}\left|Ax_{n}(t)-Ax_{m}(t)\right|
\leq\frac{\epsilon}{4(meas(D^{\prime}_{p_{0}})+1)}.
\end{equation}
Now, using (\ref{deltaAxn}) with  (\ref{Amn}) we obtain
\begin{eqnarray} \nonumber
\int^{\tau}_{0}\left|Ax_{n}(t)-Ax_{m}(t)\right|\,dt&=&\int_{D_{p_{0}}}\left|Ax_{n}(t)-Ax_{m}(t)\right|\,dt\\
\nonumber\;&&+\int_{D^{\prime}_{p_{0}}}\left|Ax_{n}(t)-Ax_{m}(t)\right|\,dt\\
\nonumber\;&\leq &\frac{\epsilon \,meas(D^{\prime}_{p_{0}})}{4(meas(D^{\prime}_{p_{0}})+1)}+\int_{D^{\prime}_{p_{0}}}\left|Ax_{n}(t)\right|\,dt\\
\nonumber\;&&+\int_{D^{\prime}_{p_{0}}}\left|Ax_{m}(t)\right|\,dt \\
&\leq & \frac{3\epsilon}{4}.\label{lest}
\end{eqnarray}
Finally, by combining (\ref{tauAxn}) and (\ref{lest}) for $m,n\geq p_{0}$ we obtain
 \begin{eqnarray*} \nonumber
\left\|Ax_{n}-Ax_{m}\right\|_{L^{1}}&=&\int^{\infty}_{0}\left|Ax_{n}(t)-Ax_{m}(t)\right|\,dt\\
\nonumber\;&\leq&\int^{\tau}_{0}\left|Ax_{n}(t)-Ax_{m}(t)\right|\,dt+
\int^{\infty}_{\tau}\left|Ax_{n}(t)-Ax_{m}(t)\right|\,dt \\
\nonumber\;&\leq&\int^{\tau}_{0}\left|Ax_{n}(t)-Ax_{m}(t)\right|\,dt+
\int^{\infty}_{\tau}\left|Ax_{n}(t)\right|\,dt\\
\nonumber\;&&+\int^{\infty}_{\tau}\left|Ax_{m}(t)\right|\,dt\\ 
\nonumber\;&\leq& \epsilon.
\end{eqnarray*}
This proves that $\left\{Ax_{n}\right\}$ is a Cauchy sequence in the Banach space
$L^{1}(\mathbb{R}_{+})$. 
Then $\left\{Ax_{n}\right\}$ has a strongly convergent subsequence in $L^{1}(\mathbb{R}_{+})$.\\
Now, by applying Theorem \ref{theow2} with $\mathcal{M}=B_{r}$ we obtain the existence of an integrable solution for the problem (\ref{e1}).\medskip
\end{proof}
\section{Example}
Consider the mixed-type functional integral equation

\begin{align}
\nonumber x(t)=&\frac{t}{t^3+1}+\frac{1}{4}\ln\left[1+\left(\frac{x^{3}(2t)}{1+x^{2}(2t)}+e^{-t}\int^{\infty}_{0}e^{-\tau}\frac{x(\tau)}{1+x^{2}(\tau)}\,d\tau\right)^{2}\right]\\
&+\frac{1}{2}\arctan \left[\int_0^t(t+s)e^{-t}u(t,s,(Qx)(s))\,ds\right]^{2},\label{exa}
\end{align}
where

\begin{equation*}
u(t,s,x)=\frac{1+t+s}{2+(1+t+s)^{3}}+\frac{ts\left(ts+\sqrt{3}\sin\,x\right)x}{4(s+1)(t^{2}s^{2}+1)}, 
\end{equation*}
and 

\begin{equation*}
(Qx)(t)=\Frac{x^{2}(t)}{(1+\left|x(t)\right|)}\Int^{t}_{0}e^{-(t+\tau)}\frac{x(\tau)}{1+x^{2}(\tau)}\,d\tau.
\end{equation*}
The equation (\ref{exa}) is of the form (\ref{e1}) with 

\begin{equation*}
g(t,x)=\frac{t}{t^{3}+1}+\frac{1}{4}\ln(1+x^{2}), 
\end{equation*}
\begin{equation*}
f(t,x)=\frac{1}{2}\arctan x^{2}, 
\end{equation*}
\begin{equation*}
k(t,s)=(t+s)e^{-t}, 
\end{equation*}

\begin{equation*}
(Tx)(t)=\Frac{x^{3}(2t)}{1+x^{2}(2t)}+e^{-t}\Int^{\infty}_{0}e^{-\tau}\frac{x(\tau)}{1+x^{2}(\tau)}\,d\tau.
\end{equation*}
Next, we prove that assumptions $(A1)-(A7)$ are fulfilled.
\begin{enumerate}[label=(A\arabic*)]
\item Taking into account that $\arctan x^{2}\leq 2x$ for $x\geq0$ and $\ln(1+x^{2})\leq x$, 
it is easy to see that $g(t,x)$ and $f(t,x)$ satisfy assumption $(A1)$ with $a(t)=t/(t^{3}+1)$, $b=1/4$, $a_{1}(t)=0$ and $b_{1}=1$.
\item  For  $x\in L^{1}(\mathbb{R}_{+})$, it is easy to see the inequalities 

$$\left|(Tx)(t)\right|\leq e^{-t}+\left|x(2t)\right|\quad\text{and}\quad\left|(Qx)(t)\right|\leq\left|x(t)\right|.$$

We take 

$$\gamma_{1}(t)=e^{-t}, \rho_{1}=1,  \phi(t)=2t, m=2,$$
and 

$$\gamma_{2}(t)=0, \rho_{2}=1,  \psi(t)=t, M=1,$$
Now, we will prove that $Q$ is continuous on $L^{1}(\mathbb{R}_{+})$. 
Let $\left\{x_{n}\right\}$ be a sequence in $L^{1}(\mathbb{R}_{+})$ which converges in $L^{1}(\mathbb{R}_{+})$
to a function $x\in L^{1}(\mathbb{R}_{+})$. Denoting $\rho_{n}=\left\|Qx_{n}-Qx\right\|$, we have 
\begin{eqnarray*}
\rho_{n}&\leq&\int^{\infty}_{0}\left|\Frac{x_{n}^{2}(t)}{(1+\left|x_{n}(t)\right|)}
-\Frac{x^{2}(t)}{(1+\left|x(t)\right|)}\right|
\left(\Int^{t}_{0}e^{-(t+\tau)}\frac{\left|x_{n}(\tau)\right|}{1+x_{n}^{2}(\tau)}\,d\tau\right)\,dt\\
&&+\int^{\infty}_{0}\Frac{x^{2}(t)}{(1+\left|x(t)\right|)}
\left(\Int^{t}_{0}e^{-(t+\tau)}\left|\frac{x_{n}(\tau)}{1+x_{n}^{2}(\tau)}-
\frac{x(\tau)}{1+x^{2}(\tau)}\right|\,d\tau\right)\,dt\\
&\leq&\int^{\infty}_{0}\left|x_{n}(t)-x(t)\right|
\left(\Int^{\infty}_{0}e^{-\tau}\,d\tau\right)\,dt\\
&&+\int^{\infty}_{0}\left|x(t)\right|
\left(\Int^{t}_{0}\left|x_{n}(\tau)-x(\tau)\right|\,d\tau\right)\,dt\\
&\leq&\int^{\infty}_{0}\left|x_{n}(t)-x(t)\right|\,dt
+\int^{\infty}_{0}\left|x(t)\right|\left(\Int^{\infty}_{0}\left|x_{n}(\tau)-x(\tau)\right|\,d\tau\right)\,dt\\
&=& (1+\left\|x\right\|)\left\|x_{n}-x\right\|.
\end{eqnarray*}
This proves that $Q$ is continuous.
\item Let  $x,y\in L^{1}(\mathbb{R}_{+})$. Using the Mean Value Theorem, we have

\begin{align*}
\left|g(t,(Tx)(t))-g(t,(Ty)(t))\right|&=\frac{1}{4}\left|\ln\left(1+\left((Tx)(t)\right)^{2}\right)-\ln\left(1+\left((Ty)(t)\right)^{2}\right)\right|\\
&\leq \frac{1}{4}\left|(Tx)(t)-(Ty)(t)\right|\\
&\leq\frac{1}{4} \left|\frac{x^{3}(2t)}{1+x^{2}(2t)}-\frac{y^{3}(2t)}{1+y^{2}(2t)}\right|\\
&\phantom{{}\leq}+\frac{1}{4}e^{-t}\int^{\infty}_{0}\left|\frac{x(\tau)}{1+x^{2}(\tau)}-\frac{y(\tau)}{1+y^{2}(\tau)}\right|d\tau\\
&\leq\frac{1}{2}\left|x(2t)-y(2t)\right|+\frac{1}{4}e^{-t}\int^{\infty}_{0}\left|x(\tau)-y(\tau)\right|d\tau.
\end{align*}
Hence, we get

\begin{align*}
\left\|g(t,(Tx)(t))-g(t,(Ty)(t))\right\|&=\int^{\infty}_{0}\left|g(t,(Tx)(t))-g(t,(Ty)(t))\right|\,dt\\
&\leq\frac{1}{2}\int^{\infty}_{0}\left|x(2t)-y(2t)\right|\,dt\\
&\phantom{{}\leq}+\frac{1}{4}\left(\int^{\infty}_{0}e^{-t}dt\right)\int^{\infty}_{0}\left|x(\tau)-y(\tau)\right|\,d\tau\\
&= \frac{1}{2}  \left\|x-y\right\|.
\end{align*}
Therefore $B$ is a strict contraction.
\item Obviously, the function $u$ satisfies Carath\'eodory conditions.
\item Taking into account that the function $h_{1}(z)=\Frac{z}{2+z^{3}}$ 
is nonincreasing for $z\geq1$ and the maximum of the function 
$h_{2}(z)=\Frac{z^{2}+\sqrt{3}z}{1+z^{2}}$ is $3/2$,  for every $t,s\geq0$ and $x\in \R$ we have 

\begin{align*}
\left|u(t,s,x)\right|&\leq\frac{1+s}{2+(1+s)^{3}}+\frac{t^{2}s^{2}+\sqrt{3}\,ts}{4(s+1)(t^{2}s^{2}+1)}\left|x\right|\\
&\leq\frac{1+s}{2+(1+s)^{3}}+\frac{3}{8}\left|x\right|
\end{align*}
We take $\alpha(s)=\Frac{1+s}{2+(1+s)^{3}}$ and $\beta=3/8$.\\
Now, for every $t,w,s\geq0$ and $x\in \R$ it can be easily observed that 
\begin{align}
\nonumber\left|u(t,s,x)-u(w,s,x)\right|&\leq\left|\frac{1+t+s}{2+(1+t+s)^{3}}-\frac{1+w+s}{2+(1+w+s)^{3}}\right|\\
\nonumber&\phantom{{}\leq}+\frac{1}{4(s+1)}\left|\left(\frac{t^{2}s^{2}}{(t^{2}s^{2}+1)}-\frac{w^{2}s^{2}}{(w^{2}s^{2}+1)}\right)x\right|\\
&\phantom{{}\leq}+\frac{\sqrt{3}}{4(s+1)}\left|\left(\frac{ts}{(t^{2}s^{2}+1)}-\frac{ws}{(w^{2}s^{2}+1)}\right)x\sin x \right|\label{uwt}
\end{align}

Applying the Mean Value Theorem for the function $h_{1}(z)=\Frac{z}{2+z^{3}}$ 
between $z_{1}=1+t+s$ and $z_{2}=1+w+s$ and taking into account that 
$\left|h^{\prime}_{1}(z)\right|\leq\Frac{2}{2+z^{3}}\leq\frac{2}{2+(1+s)^{3}}$
for every $z\geq1+s$ we obtain 
$\left|\Frac{1+t+s}{2+(1+t+s)^{3}}-\Frac{1+w+s}{2+(1+w+s)^{3}}\right|\leq\Frac{2}{2+(1+s)^{3}}\left|t-w\right|$.
Now, substituting  $z=ts$ and 
$y=ws$ into the inequalities $\left|\Frac{z^{2}}{1+z^{2}}-\Frac{y^{2}}{1+y^{2}}\right|\leq \left|z-y\right|$ and  $\left|\Frac{z}{1+z^{2}}-\Frac{y}{1+y^{2}}\right|\leq \left|z-y\right|$  we obtain
  from (\ref{uwt}) the inequality
\begin{eqnarray*}
\left|u(t,s,x)-u(w,s,x)\right|&\leq&\frac{2}{2+(1+s)^{3}}\left|t-w\right|+\frac{\left|t-w\right|s}{4(s+1)}\left|x\right|\\
&&+\frac{\sqrt{3}\left|t-w\right|s}{4(s+1)}\left|x\right|\\
&\leq& \left|t-w\right|\left(\frac{2}{2+(1+s)^{3}}+\frac{1+\sqrt{3}}{4}\left|x\right|\right),
\end{eqnarray*}
We take $h(\delta)=\left|\delta\right|$, $\displaystyle\gamma_{0}(s)=\frac{2}{2+(1+s)^{3}}$ and $\lambda=(1+\sqrt{3})/4$.
\item Since the kernel $k(t,s)=(t+s)e^{-t}$ is nonnegative we have $(Kx)(t)=\Int_0^t k(t,s)x(s)\;ds$. It is proved 
in \cite{BaCh} that $\left\|K\right\|=2/\sqrt{e}$.

\item We have $b=1/4$, $b_{1}=\rho_{1}=\rho_{2}=M=1$, 
$\beta=3/8$, $m=2$ and  $\left\|K\right\|=2/\sqrt{e}$. Therefore 

$$\gamma=b\,\rho_{1}m^{-1}+b_{1}\rho_{2}\beta M^{-1} \left\|K\right\|=\frac{1}{8}+\frac{3}{4\sqrt{e}}<1.$$
\end{enumerate}
Therefore,  Theorem \ref{mth1} ensures that Eq. (\ref{e1}) has at least one solution in the space $L^{1}(\mathbb{R}_{+})$.

\section*{References}

\bibliography{mybiblio}

\begin{thebibliography}{10}
\expandafter\ifx\csname url\endcsname\relax
  \def\url#1{\texttt{#1}}\fi
\expandafter\ifx\csname urlprefix\endcsname\relax\def\urlprefix{URL }\fi
\expandafter\ifx\csname href\endcsname\relax
  \def\href#1#2{#2} \def\path#1{#1}\fi

\bibitem{BaCh}
J.~Bana{\'s}, A.~Chlebowicz, On existence of integrable solutions of a
  functional integral equation under {C}arath\'eodory conditions, Nonlinear
  Anal. 70~(9) (2009) 3172--3179.
\newblock \href {http://dx.doi.org/10.1016/j.na.2008.04.020}
  {\path{doi:10.1016/j.na.2008.04.020}}.

\bibitem{Ta09}
M.~A. Taoudi, Integrable solutions of a nonlinear functional integral equation
  on an unbounded interval, Nonlinear Anal. 71~(9) (2009) 4131--4136.
\newblock \href {http://dx.doi.org/10.1016/j.na.2009.02.072}
  {\path{doi:10.1016/j.na.2009.02.072}}.

\bibitem{Kr58}
M.~A. Krasnosel'skii, Some problems of nonlinear analysis, Amer. Math. Soc.
  Transl. 10~(2) (1958) 345--409.

\bibitem{Kr64}
M.~A. Krasnosel'skii, Positive Solutions of Operator Equations, Noordhoff,
  Groningen, 1964.

\bibitem{KZPS}
M.~A. Krasnosel'skii, P.~P. Zabrejko, J.~I. Pustyl'nik, P.~J. Sobolevskii,
  Integral Operators in Spaces of Summable Functions, Noordhoff, Leyden, 1976.

\bibitem{Ba22}
S.~Banach, Sur les op\'erations dans les ensembles abstraits et leur
  application aux \'equations int\'egrales, Fund. Math. 3 (1922) 133--181.

\bibitem{Sch30}
J.~Schauder, Der {F}ixpunktsatz in {F}unktionalr{\"a}umen, Studia Math. 2
  (1930) 171--180.

\bibitem{Sm80}
D.~R. Smart, Fixed Point Theorems, Cambridge University Press, Cambridge, 1980.

\bibitem{ART10}
R.~P. Agarwal, D.~O'Regan, M.~A. Taoudi, Browder-{K}rasnoselskii type fixed
  point theorems in {B}anach spaces, Fixed Point Theory Appl. 2010 (2010)
  Article ID 243716, 20 pp.
\newblock \href {http://dx.doi.org/10.1155/2010/243716}
  {\path{doi:10.1155/2010/243716}}.

\bibitem{Bar03}
C.~S. Barroso, Krasnosel'skii's fixed point theorem for weakly continuous maps,
  Nonlinear Anal. 55~(1--2) (2003) 25--31.
\newblock \href {http://dx.doi.org/10.1016/S0362-546X(03)00208-6}
  {\path{doi:10.1016/S0362-546X(03)00208-6}}.

\bibitem{BarTe05}
C.~S. Barroso, E.~V. Teixeira, A topological and geometric approach to fixed
  points results for sum of operators and applications, Nonlinear Anal. 60~(4)
  (2005) 625--650.
\newblock \href {http://dx.doi.org/10.1016/j.na.2004.09.040}
  {\path{doi:10.1016/j.na.2004.09.040}}.

\bibitem{Ga10}
J.~Garcia-Falset, Existence of fixed points for the sum of two operators, Math.
  Nachr. 283~(12) (2010) 1736--1757.
\newblock \href {http://dx.doi.org/10.1002/mana.200710197}
  {\path{doi:10.1002/mana.200710197}}.

\bibitem{Ga09}
J.~Garcia-Falset, Existence of fixed points and measures of weak
  noncompactness, Nonlinear Anal. 71~(7--8) (2009) 2625--2633.
\newblock \href {http://dx.doi.org/10.1016/j.na.2009.01.096}
  {\path{doi:10.1016/j.na.2009.01.096}}.

\bibitem{LaTa07}
K.~Latrach, M.~A. Taoudi, Existence results for a generalized nonlinear
  {H}ammerstein equation on $\textit{L}_{1}$ spaces, Nonlinear Anal. 66~(10)
  (2007) 2325--2333.
\newblock \href {http://dx.doi.org/10.1016/j.na.2006.03.022}
  {\path{doi:10.1016/j.na.2006.03.022}}.

\bibitem{LaTaZe06}
K.~Latrach, M.~A. Taoudi, A.~Zeghal, Some fixed point theorems of the
  {S}chauder and {K}rasnosel'skii type and application to nonlinear transport
  equations, J. Differential Equations 221~(1) (2006) 256--271.
\newblock \href {http://dx.doi.org/10.1016/j.jde.2005.04.010}
  {\path{doi:10.1016/j.jde.2005.04.010}}.

\bibitem{Re96}
D.~O'Regan, Fixed point theory for the sum of two operators, Appl. Math. Lett.
  9~(1) (1996) 1--8.
\newblock \href {http://dx.doi.org/10.1016/0893-9659(95)00093-3}
  {\path{doi:10.1016/0893-9659(95)00093-3}}.

\bibitem{BuKi98}
T.~A. Burton, C.~Kirk, A fixed point theorem of {K}rasnoselskii-{S}chaefer
  type, Math. Nachr. 189 (1998) 23--31.
\newblock \href {http://dx.doi.org/10.1002/mana.19981890103}
  {\path{doi:10.1002/mana.19981890103}}.

\bibitem{GaLaMoTa12}
J.~Garcia-Falset, K.~Latrach, E.~Moreno-G\'alvez, M.~A. Taoudi,
  {S}chaefer-{K}rasnoselskii fixed point theorems using a usual measure of weak
  noncompactness, J. Differential Equations 252~(5) (2012) 3436--3452.
\newblock \href {http://dx.doi.org/10.1016/j.jde.2011.11.012}
  {\path{doi:10.1016/j.jde.2011.11.012}}.

\bibitem{Ta10}
M.~A. Taoudi, Krasnosel'skii type fixed point theorems under weak topology
  features, Nonlinear Anal. 72~(1) (2010) 478--482.
\newblock \href {http://dx.doi.org/10.1016/j.na.2009.06.086}
  {\path{doi:10.1016/j.na.2009.06.086}}.

\bibitem{dieud}
J.~Dieudonne, Sur les espaces de {K\"o}the, J. Anal. Math 1~(1) (1951) 81--115.
\newblock \href {http://dx.doi.org/10.1007/BF02790084}
  {\path{doi:10.1007/BF02790084}}.

\bibitem{SD}
G.~{Scorza Dragoni}, Un teorema sulle funzioni continue rispetto ad una e
  misurabili rispetto ad un'altra variabile, Rend. Sem. Mat. Univ. Padova 17
  (1948) 102--106.

\bibitem{ApZa}
J.~Appell, P.~P. Zabrejko, Nonlinear Superposition Operators, Cambridge Tracts
  in Mathematics, Vol. 95, Cambridge University Press, 1990.

\bibitem{Kr}
M.~A. Krasnosel'skii, On the continuity of the operator
  $\textit{F}u(x)=f(x,u(x))$, Dokl. Akad. Nauk. SSSR 77 (1951) 185--188 [in
  Russian].

\bibitem{BU96}
T.~A. Burton, Integral equations, implicit functions and fixed points, Proc.
  Amer. Math. Soc. 124~(8) (1996) 2383--2390.

\bibitem{LL06}
Y.~C. Liu, Z.~X. Li, Schaefer type theorem and periodic solutions of evolution
  equations, J. Math. Anal. Appl. 316~(1) (2006) 237--255.
\newblock \href {http://dx.doi.org/10.1016/j.jmaa.2005.04.045}
  {\path{doi:10.1016/j.jmaa.2005.04.045}}.

\bibitem{LL08}
Y.~C. Liu, Z.~X. Li, Krasnoselskii type fixed point theorems and applications,
  Proc. Amer. Math. Soc. 136~(4) (2008) 1213--1220.

\bibitem{JJ}
J.~Jachymski, On {I}sac's fixed point theorem for selfmaps of a {G}alerkin
  cone, Ann. Sci. Math. Qu\'ebec 18~(2) (1994) 169--171.

\bibitem{BaRi}
J.~Bana{\'s}, J.~Rivero, On measures of weak noncompactness, Ann. Mat. Pura
  Appl. 151 (1988) 213--224.

\bibitem{Bl}
F.~S. {De Blasi}, On a property of the unit sphere in a {B}anach space, Bull.
  Math. Soc. Sci. Math. R. S. Roumanie 21 (1977) 259--262.

\bibitem{BaGo}
J.~Bana{\'s}, K.~Goebel, Measures of Noncompactness in {B}anach Spaces, Vol.~60
  of Lecture Notes in Pure and Applied Mathematics, Marcel Dekker, New York,
  1980.

\bibitem{LaLe}
V.~Lakshmikantham, S.~Leela, Nonlinear {Differential Equations in Abstract
  Spaces}, Vol.~2 of International Series in Nonlinear Mathematics, Pergamon
  Press, New York, 1981.

\bibitem{BaKn2}
J.~Bana{\'s}, Z.~Knap, Measures of weak noncompactness and nonlinear integral
  equations of convolution type, J. Math. Anal. Appl. 146~(2) (1990) 353--362.
\newblock \href {http://dx.doi.org/10.1016/0022-247X(90)90307-2}
  {\path{doi:10.1016/0022-247X(90)90307-2}}.

\bibitem{ZKKM}
P.~P. Zabrejko, A.~I. Koshelev, M.~A. Krasnosel'skii, S.~G. Mikhlin, L.~S.
  Rakovshchik, V.~J. Stetsenko, Integral Equations, Noordhoff, Leyden, 1975.

\end{thebibliography}

\end{document}